\title{\LARGE Improving the Sample and Communication Complexity for Decentralized Non-Convex Optimization: A Joint Gradient Estimation and Tracking Approach}
\author{Haoran Sun$^\dag$, Songtao Lu$^\ddag$ and Mingyi Hong$^\dag$\\
$^\dag$ Department of ECE, University of Minnesota Twin Cities, Minneapolis, MN USA\\
$^\ddag$ IBM Research AI, IBM Thomas J. Watson Research Center, Yorktown Heights, NY USA\thanks{This work is completed when S. Lu was at the University of Minnesota. The authors are supported in part by NSF under Grant CMMI-172775, CIF-1910385 and by AFOSR under grant 19RT0424, ARO under grant 73202-CS.}}
\newcommand{\cmark}{\ding{51}} 
\newcommand{\xmark}{\ding{55}} 
\def\bv{\mathbf v}
\def\bx{\mathbf x}
\def\by{\mathbf y}
\def\bw{\mathbf w}
\def\bI{\mathbf I}
\def\bW{\mathbf W}
\def\bOne{\mathbf 1}
\def\leref#1{Lemma~\ref{#1}}
\def\figref#1{Fig.~\ref{#1}}
\newtheorem{lemma}{Lemma}
\newtheorem{theorem}{Theorem}
\newtheorem{Definition}{Definition}
\newtheorem{assumption}{Assumption}
\newtheorem{Corollary}{Corollary}
\newcommand{\T}{\scriptscriptstyle T}
\def\leref#1{Lemma~\ref{#1}}
\def\thref#1{Theorem~\ref{#1}}
\def\figref#1{Figure~\ref{#1}}
\newcommand{\norm}[1]{\left\lVert#1\right\rVert}
\newcommand{\ubar}[1]{\underaccent{\bar}{#1}}
\def\remark{\addtocounter{remark}{1}\def\@currentlabel{\theremark}%
\emph{Remark~\theremark}. } \makeatother
\newcounter{remark}
\begin{document}
\maketitle
\begin{abstract}
	 Many modern large-scale machine learning problems benefit from decentralized and stochastic optimization. 
	 Recent works have  shown that utilizing both decentralized computing and local stochastic gradient estimates can outperform state-of-the-art centralized algorithms, in applications involving highly non-convex problems, such as training deep neural networks. 	    
	 
	 In this work, we propose a decentralized stochastic algorithm to deal with certain smooth non-convex problems where there are $m$ nodes in the system, and each  node has a large number of samples (denoted as $n$).  
	 Differently from the majority of the existing decentralized learning algorithms for either stochastic or finite-sum problems, our focus is given to {\it both} reducing the total communication rounds among the nodes, while accessing the minimum number of local data samples. In particular, we propose an algorithm named D-GET (decentralized gradient estimation and tracking), which jointly performs decentralized gradient estimation (which estimates the local gradient using a subset of local samples) {\it and} gradient tracking (which tracks the global full gradient using local estimates). 
	 We show  that, to achieve certain $\epsilon$  stationary solution of the deterministic finite sum problem, the proposed algorithm achieves an $\mathcal{O}(mn^{1/2}\epsilon^{-1})$ sample complexity and an $\mathcal{O}(\epsilon^{-1})$ communication complexity. These bounds significantly improve upon the best existing bounds of $\mathcal{O}(mn\epsilon^{-1})$ and $\mathcal{O}(\epsilon^{-1})$, respectively.  
	 Similarly, for online problems, the proposed method achieves an $\mathcal{O}(m \epsilon^{-3/2})$ sample complexity and  an $\mathcal{O}(\epsilon^{-1})$ communication complexity, while the best existing bounds are  $\mathcal{O}(m\epsilon^{-2})$ and $\mathcal{O}(\epsilon^{-2})$, respectively. 
\end{abstract}
\newpage

\section{Introduction}
For modern large-scale information processing problems, performing centralized computation at a single computing node can require a massive amount of computational and memory resources. 
The recent advances of high-performance computing platforms enable us to  utilize distributed resources to significantly improve the computation efficiency \cite{boyd2011distributed}. These techniques now become essential for many large-scale tasks such as training machine learning models. Modern decentralized optimization shows that partitioning the large-scale dataset into multiple computing nodes could significantly reduce the amount of gradient evaluation at each computing node without significant loss of any optimality \cite{lian2017can}. Compared to the typical parameter-server type distributed system with a fusion center, decentralized optimization has its  unique advantages in preserving data privacy, enhancing network robustness, and improving the computation efficiency \cite{lian2017can, nedic2009distributed, chen2012diffusion, yuan2016convergence}. Furthermore, in many emerging applications  such as collaborative filtering \cite{ali2004tivo}, federated learning \cite{konevcny2016federated}, distributed beamforming \cite{beam12} and dictionary learning \cite{chen2014dictionary}, the data is naturally collected in a decentralized setting, and it is not possible to transfer the distributed data to a central location. 
Therefore, decentralized computation has sparked considerable interest in both academia and industry. 

Motivated by these facts, in this paper we consider the following optimization problem, 
\begin{align}\label{P1}
\min_{\bw \in \mathbb{R}^{d}} f(\bw)=\frac{1}{m}  \sum_{i=1}^m f^i({\bw}),
\end{align}
where  $f^i(\bw): \mathbb{R}^{d} \rightarrow \mathbb{R}$ denotes the loss function which is smooth (possibly non-convex), and $m$ is the total number of such functions. 
We consider the scenario where each node $i \in [m]:= \{1,\cdots, m\}$ can only access its local function $f^i(\bw)$, and can communicate with its neighbors via an undirected and unweighted graph $\mathcal{G}=\{\mathcal{E}, \mathcal{V}\}$. In this work, we consider two typical representations of the local cost functions:
\begin{enumerate}
	\item {\bf Finite-Sum Setting:} Each $f^i({\bw})$ is defined as the average cost of $n$ local samples, that is:
	\begin{align}\label{eq:finite:sum}
	f^i(\bw)= \frac{1}{n} \sum_{j= 1}^{n} f^i_j({\bw}), \forall i
	\end{align}
	where $n$ is the total number of local samples at node $i$, $f^i_j({\bw})$ denotes the cost for $j$th data sample at $i$th node.  
	\item {\bf Online Setting:}  Each $f^i({\bw})$ is defined as the following expected cost 
	\begin{align}\label{eq:online}
	f^i(\bw)= \mathbb{E}_{\xi\sim\mathcal{D}_i}[f^i_{\xi}(\bw)],\forall i
	\end{align}
	where $\mathcal{D}_i$ denotes the data distribution at node $i$. 
\end{enumerate}

To explicitly model the communication pattern, it is conventional to reformulate problem \eqref{P1} as the following consensus problem, by introducing $m$ local variables $\bx_i \in \mathbb{R}^{d}, \; i=1,\cdots, m$,  and use the long vector $\bx$ to stacks all the local variables: $\bx := [\bx_1; \bx_2; \cdots; \bx_m] \in \mathbb{R}^{md}$:
\begin{align}\label{P2}
\min_{\bx \in \mathbb{R}^{md}} f(\bx)=\frac{1}{m}  \sum_{i=1}^m f^i({\bx_i}),
\quad\textrm{s.t.}\quad  \bx_i=\bx_k, \quad\forall (i, k)\in \mathcal{E}.
\end{align}
This way, the loss functions $f^i(\cdot)$'s become separable. 

For the above decentralized non-convex problem \eqref{P2}, one essential task is to find an $\epsilon$ stationary solution $\bx^*:= [\bx^*_1; \cdots; \bx^*_m]\in \mathbb{R}^{md}$ such that
\begin{align}\label{eq:stationarity}
h(\bx^*) =  \left \|\frac{1}{m} \sum_{i=1}^m \nabla f^i(\bx^*_i) \right\|^2 +     {\frac{1}{m}} \sum_i^m \left\|\bx^*_i- \frac{1}{m}\sum_i^m \bx^*_i \right\|^2 \le \epsilon.
\end{align}
Note that the above solution quality measure encodes both the size of local gradient error for classical centralized non-convex problems and the consensus error for decentralized optimization. It is easy to verify that when $h(\bx^*)$ goes to zero, an $\epsilon$ stationary solution for problem \eqref{P1} is obtained.

Many modern decentralized methods can be applied to obtain the above mentioned $\epsilon$ stationary solution for problem \eqref{P2}. In the finite-sum setting  \eqref{eq:finite:sum}, deterministic decentralized methods such as Primal-Dual, NEXT, SONATA, xFILTER \cite{hong2017prox, di2016next, sun2019convergence, sun2018distributed}, which process the local dataset in full batches, typically achieve  $\mathcal{O}(\epsilon^{-1})$ communication complexity (i.e., $\mathcal{O}(\epsilon^{-1})$ rounds of message exchanges are required to obtain $\epsilon$ stationary solution), and  $\mathcal{O}(mn\epsilon^{-1})$ sample complexity (i.e., that many numbers of evaluations of local sample gradients $\{\nabla f^i_j(\bx_i)\}$ are required) \footnote{Note that for the finite sum problem \eqref{eq:finite:sum}, the ``sample complexity" refers to the total number of samples {\it accessed} by the algorithms to compute sample gradient $\nabla f^i_j(\bx_i)$'s. If the same sample $j\in[n_i]$ is accessed $k$ times and each time the evaluated gradients are different, then the sample complexity increases by $k$.}.  Meanwhile,  stochastic methods such as PSGD, D$^2$, stochastic gradient push, GNSD \cite{lian2017can, tang2018d,assran2018stochastic, gnsd19}, which randomly pick subsets of local samples, achieve  $\mathcal{O}(m\epsilon^{-2})$ sample and $\mathcal{O}(\epsilon^{-2})$ communication complexity. 
These complexity bounds indicate that, when the sample size is large (i.e., $\epsilon^{-1} = o(n)$), the stochastic  methods are preferred for lower  sample complexity, but the deterministic methods still achieve lower communication complexity. On the other hand, in the online setting \eqref{eq:online}, only stochastic methods can be applied, and those methods again achieve $\mathcal{O}(m\epsilon^{-2})$ sample and $\mathcal{O}(\epsilon^{-2})$ communication complexity \cite{tang2018d}.

\subsection{Related Works}\label{sec:related}
\subsubsection{Decentralized Optimization}
Decentralized optimization has been extensively studied for convex problems and can be traced back to the 1980s \cite{bertsekas1989parallel}. Many popular algorithms, including decentralized gradient descent (DGD)   \cite{nedic2009distributed, yuan2016convergence}, distributed dual averaging \cite{duchi2011dual}, EXTRA \cite{shi2015extra},  distributed augmented Lagrangian method  \cite{jakovetic2014linear}, adaptive diffusion \cite{chen2012diffusion,lu2014sparsity} and alternating direction method of multipliers (ADMM) \cite{schizas2009distributed, boyd2011distributed, mota2013d, shi2014linear} have been studied in the literature. {We refer the readers to the recent survey \cite{nedic2018network} and the references therein for a complete review.} Recent works also include the study on optimal convergence rates with respect to the network dependency  for strongly convex \cite{scaman2017optimal} and convex \cite{scaman2018optimal} problems.
When the problem becomes non-convex, many algorithms such as primal-dual based methods  \cite{hong2016convergence, hong2017prox}, gradient tracking based methods   \cite{di2016next, daneshmand2016distributed}, and non-convex extensions of DGD methods \cite{zeng2018nonconvex} have been proposed, where the $\mathcal{O}(\epsilon^{-1})$  {iteration and communication complexity} have been shown. Recently, optimal algorithm with respect to the network dependency has also been proposed in \cite{sun2018distributed} with $\mathcal{O}({\gamma^{-1/2}}\times \epsilon^{-1})$ computation  and $\mathcal{O}(\epsilon^{-1})$ communication complexity, where $\gamma$ denotes the spectral gap of the communication graph $\mathcal{G}$. {Note that the above algorithms all require $\mathcal{O}(1)$ full gradient evaluations per iteration, so when directly applied to solve problems where each $f^i(\cdot)$ takes the  form in  \eqref{eq:finite:sum}, they all require {$\mathcal{O}(mn\epsilon^{-1})$} local data samples.}

However, due to the requirement that each iteration of the algorithm needs a full gradient evaluation, the above batch methods can be computationally very demanding. One natural solution is to use the stochastic gradient to approximate the true gradient. Stochastic decentralized non-convex methods can be traced back to \cite{bianchi2013convergence, bianchi2013performance}, and recent advances including DSGD \cite{jiang2017collaborative}, PSGD \cite{lian2017can}, D$^2$ \cite{tang2018d}, GNSD \cite{gnsd19} and stochastic gradient push \cite{assran2018stochastic}. However, the large variance coming from the stochastic gradient estimator and the use of diminishing step size slow down the convergence, resulting at least $\mathcal{O} (m\epsilon^{-2})$ sample and $\mathcal{O} (\epsilon^{-2})$ communication cost. 

Recent works also include studies on developing distributed algorithms having second-order guarantees \cite{hong2018gradient, daneshmand2018second, vlaski2019distributed, swenson2019distributed}. This is an interesting research direction that further showcases the strength of decentralized algorithms. However, to limit the scope of this paper, we only focus on  convergence issues of the decentralized method  to first-order solutions (as defined in \eqref{eq:stationarity}).

\subsubsection{Variance Reduction}
{Consider the following  non-convex finite sum problem: {$\min_{\bw \in \mathbb{R}^{d}} f(\bw)=\frac{1}{mn}\sum_{j=1}^{mn} f_j(\bw)$}.} If we assume that $f(\cdot)$ has Lipschitz gradient, and  directly apply  the vanilla gradient descent (GD) method on $f(\bw)$, then it requires  {$\mathcal{O} (mn\epsilon^{-1})$}  gradient evaluations to reach $\|\nabla f(\bw)\|^2 \le \epsilon$ \cite{nesterov1998introductory}. 
When  {$m\times n$} is large, it is usually preferable to process a subset of data each time. In this case, stochastic gradient descent (SGD) can be used to  
achieve an $\mathcal{O} (\epsilon^{-2})$ convergence rate \cite{ghadimi2013stochastic}.

To bridge the gap between the GD and SGD,  many variance reduced  gradient estimators have been proposed, including  SAGA \cite{defazio2014saga} and SVRG \cite{johnson2013accelerating}. The idea is to reduce the variance of the stochastic gradient estimators and substantially improves the convergence rate. In particular, the above approaches have been shown to achieve sample complexities of  {$\mathcal{O} ((mn)^{2/3}\epsilon^{-1})$}   for finite sum problems \cite{reddi2016stochastic, allen2016variance, lei2017non}  and $\mathcal{O} (\epsilon^{-5/3})$ for online problem \cite{lei2017non}. Recent works further improve the above gradient estimators and achieve  {$\mathcal{O} ((mn)^{1/2}\epsilon^{-1})$} sample complexity  for finite sum problems \cite{nguyen2019optimal, fang2018spider, wang2018spiderboost, zhou2018stochastic} and $\mathcal{O} (\epsilon^{-3/2})$ sample complexity for online problems \cite{fang2018spider, wang2018spiderboost}. At the same time, the  {$\mathcal{O} ({(mn)}^{1/2}\epsilon^{-1})$} sample complexity is shown to be optimal when  {$m\times n\le \mathcal{O} (\epsilon^{-2})$} \cite{fang2018spider}.
However, it is important to mention that one has to be careful when comparing various complexity bounds. This is because, one key assumption that enables the variance reduced algorithms to achieve improved complexity with respect to $m\times n$ is that, {\it each} component function $f_j(\cdot)$ has Lipschitz gradient (therefore they are ``similar" in certain sense), while the vanilla GD only requires that the {\it sum} of the component functions has Lipschitz gradient.

\subsubsection{Decentralized Variance Reduction}
The variance reduced decentralized optimization has been extensively studied for convex problems.  The DSA proposed in \cite{mokhtari2016dsa} combines the algorithm design ideas from  EXTRA \cite{shi2015extra} and SAGA \cite{defazio2014saga}, and achieves the first expected linear convergence for decentralized stochastic optimization. Recent works also include the DSBA \cite{shen2018towards},  diffusion-AVRG \cite{yuan2018variance}, ADFS \cite{hendrikx2019accelerated}, SAL-Edge \cite{wang2019edge}, GT-SAGA \cite{xin2019variance}, and Network-DANE \cite{li2019communication}.  In particular, the DSBA \cite{shen2018towards} introduces the monotone operator to reduce the dependence on the problem condition number compared to DSA \cite{mokhtari2016dsa}. Diffusion-AVRG combines the exact diffusion \cite{yuan2018exact} with the AVRG \cite{ying2018stochastic}, and extend the results to scenarios that the size of the data is unevenly distributed. ADFS \cite{hendrikx2019accelerated} further uses randomized pairwise communication to achieve optimal network scaling. The work \cite{wang2019edge} combines the augmented Lagrange (AL) based method with SAGA \cite{defazio2014saga} to allow flexible mixing weight selections. GT-SAGA \cite{xin2019variance} improves the joint dependence on the condition number and number of samples per node. The Network-DANE \cite{li2019communication} studies the Newton-type method and establishes the linear convergence for quadratic losses. However, when the problem becomes non-convex, to the best of our knowledge, no algorithms with provable guarantees are available.

\subsection{Our Contribution}

Compared with the majority of the existing decentralized learning algorithms for either stochastic or deterministic problems, the focus of this work is given to {\it both} reducing the total {communication} and sample complexity. Specifically, we propose a decentralized gradient estimation and tracking (D-GET) approach, which uses a subset of samples to estimate the local gradients (by utilizing modern variance reduction techniques \cite{fang2018spider, nguyen2017sarah}), while using the differences of past local gradients to track the global gradients (by leveraging the idea of decentralized gradient tracking \cite{di2016next,pu2018distributed}). 
Remarkably, the proposed approach enjoys a sample complexity of  {$\mathcal{O}(mn^{1/2}\epsilon^{-1})$} and communication complexity of $\mathcal{O}(\epsilon^{-1})$ for finite sum problem \eqref{eq:finite:sum}, which outperforms all existing decentralized methods\footnote{Note that, as mentioned before, deterministic batch gradient based methods such as xFILTER, Prox-PDA, NEXT, EXTRA achieve an $\mathcal{O}(mn\epsilon^{-1})$ sample complexity. However, to be fair, one cannot directly compare those bounds with what can be achieved by sample based, variance reduced methods, since the assumptions on the Lipschitz gradients are slightly different. }.  
The sample complexity rate is $\sqrt{m}$ worse than the known sample complexity lower bound for centralized problem \cite{fang2018spider}, and the communication complexity matches the existing communication lower bound \cite{sun2018distributed} for decentralized non-convex optimization (in terms of the dependency in $\epsilon$). 
Furthermore, the proposed approach is also able to achieve $\mathcal{O}(m \epsilon^{-3/2})$ sample complexity and $\mathcal{O}(\epsilon^{-1})$  communication complexity for the online problem \eqref{eq:online}, reducing the best existing bounds {(such as those obtained in \cite{tang2018d, gnsd19})} by factors of $\mathcal{O}(\epsilon^{-1/2})$ and $\mathcal{O}(\epsilon^{-1})$, respectively.  
We illustrate the main results of this work in  \figref{fig:1}, and compare the gradient and communication cost for state-of-the-art decentralized non-convex optimization approaches in Table   \ref{fig:table_compare}\footnote{For deterministic batch algorithms such as DGD, NEXT, Prox-PDA and xFILTER, the bounds are obtained directly by multiplying their respective convergence rates with $m\times n$, since when directly applied to solve finite-sum problems, each iteration requires $\mathcal{O}(1)$ full gradient evaluation.}.
Note that in Table \ref{fig:table_compare},  by {\it constant stepsize} we mean that stepsize is not dependent on the target accuracy $\epsilon$, {nor is it dependent on the iteration number}.

\begin{figure}[t]
	\centering
	\begin{minipage}[c]{0.48\linewidth}
		\includegraphics[width=\linewidth]{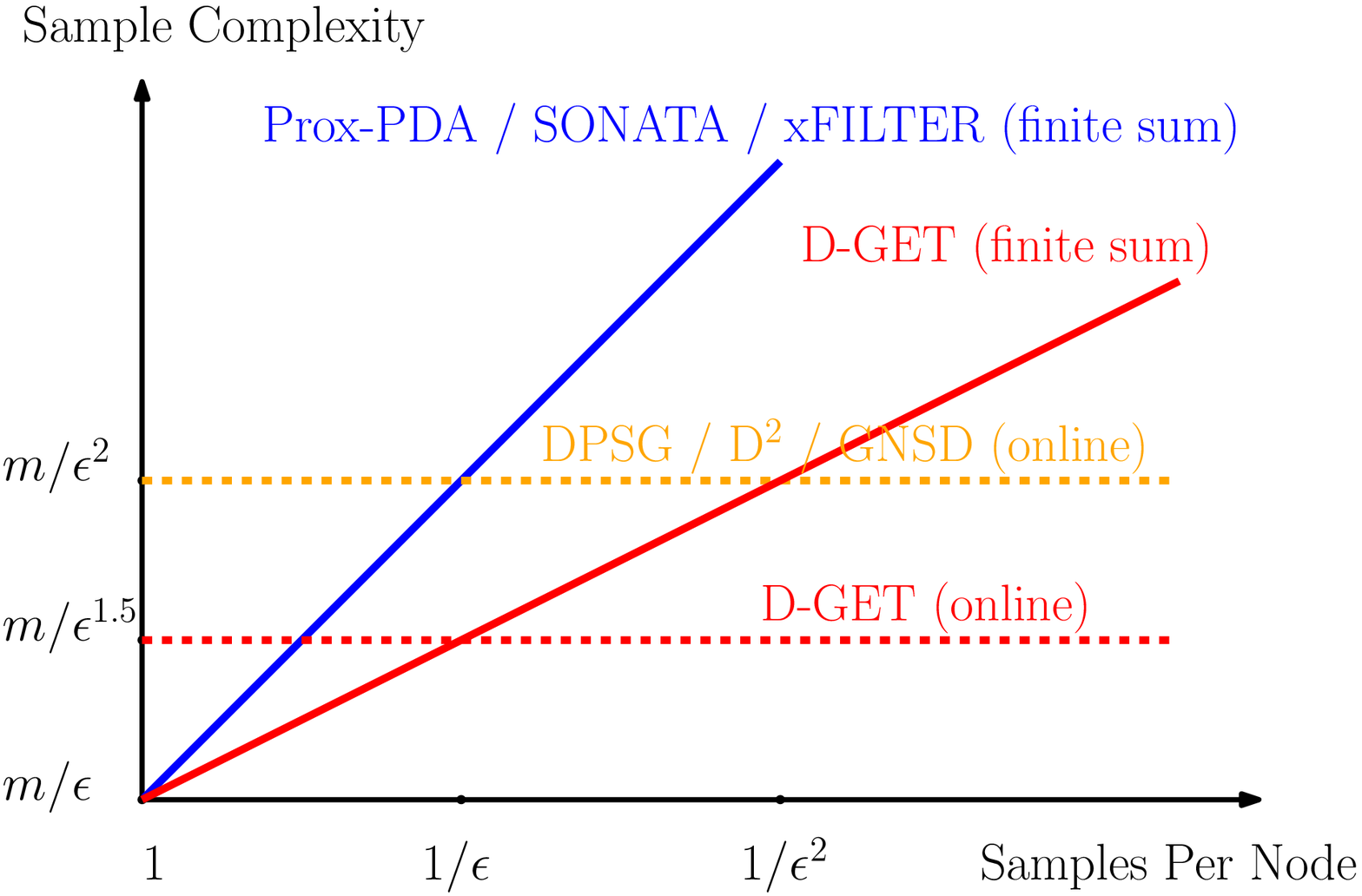}
		\centering{(a) Sample Complexity}
	\end{minipage}
	\begin{minipage}[c]{0.48\linewidth}
		\includegraphics[width=\linewidth]{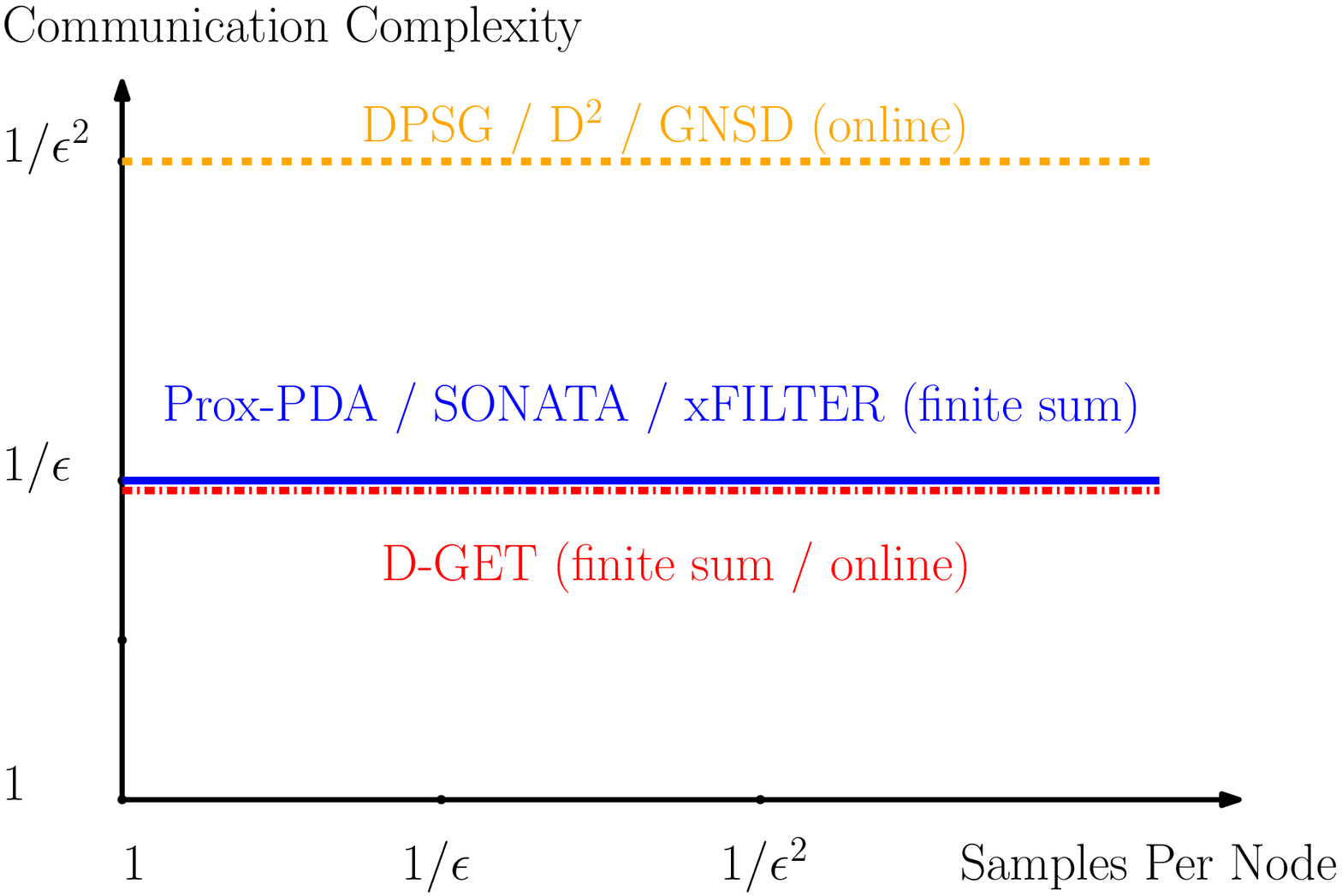}
		\centering{(b) Communication Complexity}
	\end{minipage}
	\caption{\small Comparison of the sample and communication complexities for a number of decentralized methods. Existing deterministic methods enjoy lower sample complexity at smaller sample sizes, but such complexity scales linearly when the number of samples increases. Stochastic methods generally suffer from high communication complexity. The proposed D-GET bridges the gap between existing deterministic and stochastic methods, and achieves the optimal sample and communication complexities. Note that online methods can also be applied for finite sum problems, thus the actual sample complexity of D-GET is the minimum rate of both cases.}
	\label{fig:1}
\end{figure}

\begin{table}[t]
\caption{Comparison of algorithms on decentralized non-convex optimization}
\label{fig:table_compare}
\begin{center}
\begin{small}
\begin{sc}
\begin{tabular}{lccccccc}
\toprule
Algorithm & Constant Stepsize & Finite-Sum & Online & Communication \\
\midrule
DGD \cite{zeng2018nonconvex} & \xmark  & $\mathcal{O}(mn\epsilon^{-2})$ & \xmark & $\mathcal{O}(\epsilon^{-2})$\\
SONATA \cite{sun2019convergence} & \cmark  & $\mathcal{O}(mn\epsilon^{-1})$ & \xmark & $\mathcal{O}(\epsilon^{-1})$\\
Prox-PDA \cite{hong2017prox} & \cmark & $\mathcal{O}(mn\epsilon^{-1})$ & \xmark & $\mathcal{O}(\epsilon^{-1})$\\
xFILTER \cite{sun2018distributed} & \cmark  & $\mathcal{O}(mn\epsilon^{-1})$ & \xmark & $\mathcal{O}(\epsilon^{-1})$\\
PSGD \cite{lian2017can} & \xmark & $\mathcal{O}(m\epsilon^{-2})$ & $\mathcal{O}(m\epsilon^{-2})$ & $\mathcal{O}(\epsilon^{-2})$\\
D$^2$ \cite{tang2018d}  & \cmark & $\mathcal{O}(m\epsilon^{-2})$ & $\mathcal{O}(m\epsilon^{-2})$ & $\mathcal{O}(\epsilon^{-2})$\\
GNSD \cite{gnsd19}  & \cmark & $\mathcal{O}(m\epsilon^{-2})$ & $\mathcal{O}(m\epsilon^{-2})$  & $\mathcal{O}(\epsilon^{-2})$\\
{\bf D-GET} (this work) & \cmark & $\mathcal{O}( m\sqrt{n}\epsilon^{-1})$ & $\mathcal{O}( m \epsilon^{-3/2})$ & $\mathcal{O}(\epsilon^{-1})$\\
{\bf Lower Bound} \cite{fang2018spider, sun2018distributed} & - & $\mathcal{O}( \sqrt{mn}\epsilon^{-1})$ & - & $\mathcal{O}(\epsilon^{-1})$\\
\bottomrule
\end{tabular}
\end{sc}
\end{small}
\end{center}
\end{table}

\section{The Finite Sum Setting}\label{Section_FiniteSum}
In this section, we  consider the non-convex decentralized optimization problem \eqref{P2} with finite number of  samples as defined in \eqref{eq:finite:sum}, which is restated below:
\begin{align}\label{P2_finite}
\min_{\bx \in \mathbb{R}^{md}} f(\bx)=\frac{1}{mn}  \sum_{i=1}^m   \sum_{j= 1}^{n} f^i_j({\bx_i}),
\quad\textrm{s.t.}\quad  \bx_i=\bx_k, \quad\forall (i, k)\in \mathcal{E}.\tag{P1}
\end{align}
We make the following standard assumptions on the above problem:
\begin{assumption}  \label{A1}
The objective function has Lipschitz continuous gradient with constant $L$:
\begin{equation}
\|\nabla f_j^i(\bx_i)-\nabla f_j^i(\bx'_i)\|\le L\|\bx_i-\bx'_i\|,\forall i, j
\end{equation}
which also implies
\begin{subequations}
\begin{align}
&\|\nabla f^i(\bx_i)-\nabla f^i(\bx'_i)\|^2 \le L^2 \|\bx_i-\bx'_i\|^2,\forall i\\
&\|\nabla f_j(\bx)-\nabla f_j(\bx')\|^2 \le L^2 \|\bx-\bx'\|^2,\forall j\\
&\|\nabla f(\bx)-\nabla f(\bx')\|^2 \le {L^2} \|\bx-\bx'\|^2.
\end{align}
\end{subequations}
\end{assumption}

\begin{assumption}  \label{A3}
	The mixing matrix $\bW\in\mathbb{R}^{m\times m}$ is symmetric, and satisfying the following
\begin{equation}\label{W}
	|\ubar{\lambda}_{\max}(\bW)|:=\eta<1,\quad  \bW \bOne=\bOne,
\end{equation}
where $\ubar{\lambda}_{\max}(\bW)$ denotes the second largest eigenvalue of $\bW$.
\end{assumption}

Note that many choices of mixing matrices satisfy the above condition. Here we give three commonly used mixing matrices \cite{xiao2004fast, boyd2004fastest}, where $d_i$ denotes the degree of node $i$, and $d_{\max} = \max_i \{d_i\}$:
\begin{itemize}
\item  Metropolis-Hasting Weight 
\begin{align}
    w_{ij}=\begin{cases}
    \frac{1}{1+\max\{d_i, d_j\}}, &\text{if}\quad \{i, j\}\in \mathcal{E},\\
    1 - \sum_{\{i, k\}\in \mathcal{E}} w_{ik}, &\text{if}\quad i=j,\\
    0, & \text{otherwise}.
    \end{cases}
\end{align}

\item  Maximum-Degree Weight
\begin{align}
    w_{ij}=\begin{cases}
    \frac{1}{d_{\max}}, &\text{if} \quad\{i, j\}\in \mathcal{E},\\
    1 - \frac{d_i}{d_{\max}}, &\text{if}\quad i=j,\\
    0, & \text{otherwise}.
    \end{cases}
\end{align}

\item Laplacian Weight  

 \begin{align}
    w_{ij}=\begin{cases}
    \gamma &\text{if}\quad \{i, j\}\in \mathcal{E},\\
    1- \gamma d_i, , &\text{if}\quad i=j,\\
    0 & \text{otherwise}.
    \end{cases}
\end{align}
If we use $\mathcal{L}$ to denote the graph Laplacian matrix, and $\lambda_{\max}, \ubar{\lambda}_{\min}$ as the largest and second smallest eigenvalue, then one of the common choices of $\gamma$ is $\frac{2}{\lambda_{\max}(\mathcal{L})+\ubar{\lambda}_{\min}(\mathcal{L})}$.  
 \end{itemize}

Next, let us formally define our communication and sample complexity measures.

\begin{Definition} {\bf(Sample Complexity)} The Incremental First-order Oracle (IFO) is defined as an operation in which, one node $i\in [m]$ takes a data sample $j \in [n]$, a point $\bw\in \mathbb{R}^d$, and returns the pair $(f^i_j(\bw), \nabla f^i_j(\bw))$.  The sample complexity  is defined as the total number of IFO calls required {across the entire network} to achieve an $\epsilon$ stationary solution defined in \eqref{eq:stationarity}.
\end{Definition}

\begin{Definition} {\bf(Communication Complexity)} 
In one round of communication, each node $i\in [m]$ is allowed to broadcast and received one $d$-dimensional vector to and from its neighbors, respectively.  Then the communication complexity is defined as the total rounds of communications  required to achieve an $\epsilon$ stationary solution defined in \eqref{eq:stationarity}. 
\end{Definition}

\subsection{Algorithm Design}\label{Sec:Algorithm}

In this section, we introduce the proposed  algorithm named Decentralized Gradient Estimation and Tracking (D-GET), for solving problem \eqref{P2_finite}.  
To motivate our algorithm design, we can observe from our discussion in Section \ref{sec:related} that,   the existing deterministic decentralized methods typically suffer from the high sample complexity, while the decentralized stochastic algorithms suffer from the high communication cost. Such a phenomenon inspires us to find a solution in between, which could simultaneously reduce the sample and the communication costs. 

{One natural solution is to incorporate the modern variance reduction techniques into the classical decentralized methods. Our idea is to use some variance reduced gradient estimator to track the full gradient of the entire problem, then perform decentralized gradient descent update. The gradient tracking step gives us fast convergence with a constant stepsize, while the variance reduction method significantly reduces the variation of the estimated gradient.}

Unfortunately, the decentralized methods and variance reduction techniques cannot be directly combined. Compared with the existing decentralized and variance reduction techniques in the literature, the key challenges in the algorithm design and analysis are given below: 
\begin{itemize}
	\item Due to the decentralized nature of the problem, none of the nodes can access the full gradient of the original objective function. The (possibly uncontrollable)  network consensus error always exists during the whole process of implementing the decentralized algorithm. Therefore, it is not clear that the existing variance reduction methods could be applied at each individual node effectively, since all of those require accurate global  gradient evaluation from time to time.
	\item It is then natural to integrate some procedure that is able to approximate the global gradient. For example, one straightforward way to perform gradient tracking is to introduce a new auxiliary variable $\by$ as the following \cite{di2016next, gnsd19}, which is updated by only using local estimated gradient and neighbors' parameters:
	\begin{align}\label{update:y:2}
	\by^{r}_i=\sum_{k\in\mathcal{N}_i}\bW_{ik}\by^{r-1}_k+  \frac{1}{|S^r_2|} \sum_{j\in S^r_2} \nabla f^i_j(\bx^{r}_i) -  \frac{1}{|S^{r-1}_2|} \sum_{j\in S^{r-1}_2} \nabla f^i_j(\bx^{r-1}_i)
	\end{align}
	where $S^r_2$ and $S^{r-1}_2$ are the samples selected at the $r$ and $r-1$th iterations, respectively.  
	If the tracked $\by_i$'s were used in the  (local) variance reduction procedure, there would be at least two main issues of decreasing the variance resulted from the tracked gradient as follows: 
	\emph{i}) at the early stage of implementing the decentralized algorithm,  the consensus/tracking error may dominate the variance of the tracked gradient, since the message of the full gradient has not been sufficiently propagated through the network. Consequently, performing variance reduction on $\by_i$'s will not be able to increase the quality of the full gradient estimation;
	\emph{ii})  even assuming that there was no consensus error. Since only the stochastic gradients, i.e., $\sum_{j\in\mathcal{S}^r_2}\nabla f^i_j(\bx^r_i)$, were used in the tracking, the $\by^r_i$'s themselves had high variance, 
	resulting that such (possibly low-quality) full gradient estimates may not be compatible to variance reduction methods as developed in the current literature (which often require full gradient evaluation from time to time). 
\end{itemize}

 The challenges discussed above suggest that it is non-trivial to design an algorithm that can be implemented in a fully decentralized manner, while still achieving the superior sample complexity and convergence rate achieved by state-of-the-art variance reduction methods. 
 In this work, we propose an algorithm which uses a novel decentralized gradient estimation and tracking strategy, together with a number of other design choices, to address the issues raised above. 

To introduce the algorithm, let us first define two auxiliary local variables $\bv_i$ and $\by_i$, where $\bv_i$ is designed to estimate the local full batch gradient $\frac{1}{n} \sum_{j=1}^n \nabla f^i_j(\bx_i)$ by only using sample gradient $\nabla f^i_j(\bx_i)'s$, while $\by_i$ is designed  to track the global average gradient $\frac{1}{mn} \sum_{i=1}^m \sum_{j=1}^n \nabla f^i_j(\bx_i)$ by utilizing $\bv_i$'s. After the local and global gradient estimates are obtained, the algorithm performs local update based on the direction of $\by_i$; see the main steps below.  

\begin{itemize}
    \item  Local update using estimated gradient ($\bx$ update): Each local node $i$ first combines its previous iterates $\bx_i^{r-1}$ with its local neighbors $\bx_k^{r-1},\; k\in \mathcal{N}_i$ (by using the $k$th row of weight matrix $\mathbf{W}$), then makes a prediction based on the gradient estimate $\mathbf{y}^{r-1}_i$, i.e.,
\begin{align}\label{update:x}
\bx^{r}_i=\sum_{k\in\mathcal{N}_i}\bW_{ik}\bx^{r-1}_k-\alpha  \by^{r-1}_i.
\end{align}

\item Estimate local gradients ($\bv$ update): Each local node $i$ either directly calculates the full local gradient $\nabla f^i(\bx^r_i)$, or estimates its local gradient via an estimator $\bv$ using $|S_2|$ random samples, depending on the iteration $r$, i.e., 
\begin{align}\label{update:v}
\begin{cases}
\begin{aligned}
\bv_i^{r} &= \nabla f^i(\bx_i^{r}), && \text{mod}(r,q)=0\\
\bv_i^{r} &=  \frac{1}{|S_2|} \sum_{j\in S_2} \left[ \nabla f_j^i(\bx_i^{r}) - \nabla f_j^i(\bx_i^{r-1})  \right]+\bv_i^{r-1}, && \text{mod}(r,q)\ne 0
\end{aligned}
\end{cases}
\end{align}
where $q>0$ is the interval in which local full gradient will be evaluated once.

\item  Track global gradients ($\by$ update): Each local node $i$ combines its previous local estimate  $\by^{r-1}_i$ with its local neighbors $\by_k^{r-1}, k\in \mathcal{N}_i$, then makes a new estimation based on the fresh information $\bv_i^r$, i.e.,
\begin{align}\label{update:y}
\by^{r}_i=\sum_{k\in\mathcal{N}_i}\bW_{ik}\by^{r-1}_k+\bv_i^{r}-\bv_i^{r-1}.
\end{align}
\end{itemize}

In the following table, we summarize the proposed algorithm in a more compact form. 
Note that we use $\bx\in \mathbb{R}^{md}$, $\bv\in \mathbb{R}^{md}$, $\by\in \mathbb{R}^{md}$, $\nabla f(\bx) \in \mathbb{R}^{md}$ and $\nabla f_j(\bx)\in \mathbb{R}^{md}$ to denote the concatenation of the $\bx_i \in \mathbb{R}^{d}$, $\bv_i \in \mathbb{R}^{d}$, $\by_i \in \mathbb{R}^{d}$, $\nabla f^i(\bx_i)\in \mathbb{R}^{d}$ and $\nabla f^i_j(\bx_i) \in \mathbb{R}^{d}$ across all nodes.

\begin{algorithm}[h]
	\caption{{D-GET Algorithm for the finite sum problem \eqref{P2_finite}}}
	\label{Algorithm_Finite}
	\begin{algorithmic}
		\State {\bfseries Input:}  $\bx^0, \alpha, q, |S_2|$
		\State $\bv^{0} =  \nabla f(\bx^{0})$, $\by^0 = \nabla f(\bx^{0})$
		\For{$r=1,2,\ldots$}
		\State $\bx^{r}= \bW \bx^{r-1}-\alpha \by^{r-1}$ \Comment{local communication \& update} 
		\If {mod$(r,q)=0$}
		\State Calculate the full gradient
		\State $\bv^{r} =  \nabla f(\bx^{r})$
		\Comment{local gradient computation} 
		\Else
		\State Each node draws $S_2$ samples from $[n]$ with replacement
		\State $\bv^{r}=  \frac{1}{|S_2|} \sum_{j\in S_2} \left[ \nabla f_j(\bx^{r}) - \nabla f_j(\bx^{r-1})  \right]+\bv^{r-1}$
		\Comment{local gradient estimation} 
		\EndIf
		\State $\by^{r}= \bW \by^{r-1}+\bv^{r}-\bv^{r-1}$
		\Comment{global gradient tracking}
		\EndFor
	\end{algorithmic}
\end{algorithm}

\remark The above algorithm can also be interpreted as a ``double loop" algorithm, where each outer iteration (i.e., mod $(r, q)=0$) is followed by $q-1$  inner iterations (i.e., mod $(r, q)\ne 0$). The inner loop aims to estimate the local gradient via stochastic sampling at every iteration $r$, while the outer loop aims to reduce the estimation variance by recalculating the full batch gradient at every $q$ iterations. The local communication, update, and tracking steps are performed at both inner and outer iterations.

\remark {We further remark that, in D-GET, the total communication rounds is in the same order as the total number of iterations, since only two rounds of communications are performed per iteration, via broadcasting the local variable $\bx_i^{r-1}$ and $\by_i^{r-1}$ to their neighbors, and combining local $\bx^{r-1}_k$ and $\by^{r-1}_k$'s, $k\in \mathcal{N}_i$. On the other hand, the total number of samples used per iteration is either  $m|S_2|$ (where inner iterations are executed) or $mn$ (where outer iterations are executed)}.

\remark Note that our $\bx$ and $\by$ updates are reminiscent of the classical gradient tracking methods \cite{di2016next, gnsd19}, and $\bv$ update takes a similar form as the SARAH/SPIDER estimator \cite{nguyen2017sarah, fang2018spider}.
However, it is non-trivial to directly combine the gradient tracking and the variance reduction together, as we mentioned at the beginning of Section \ref{Sec:Algorithm}. The proposed D-GET uses a number of design choices to address these challenges.   For example, two vectors $\bv$ and $\by$ are used to respectively estimate the local and global gradients, in a way that the local gradient estimates do not depend on the (potentially inaccurate) global tracked gradients; to reduce the variance in $\by$, we occasionally use the full local gradient to perform tracking, etc. Nevertheless, the key challenge in the analysis is to properly bound the accumulated errors from the two estimates $\bv$ and $\by$.

\subsection{Convergence Analysis}\label{Convergence_Analysis_FS}

To facilitate our analysis, we first define the average iterates $\bar{\bx}$ and $\bar{\by}$ among all $m$ nodes,
\begin{subequations}
\begin{align}\label{def_bar}
&\bar{\bx}^r:=\frac{1}{m}\bOne^{\T}\bx^r=\frac{1}{m}\sum_i \bx_i^r, \\
&\bar{\bv}^r:=\frac{1}{m}\bOne^{\T}\bv^r=\frac{1}{m}\sum_i \bv_i^r,\\ &\bar{\by}^r:=\frac{1}{m}\bOne^{\T}\by^r=\frac{1}{m}\sum_i \by_i^r.
\end{align}
\end{subequations}
Note that here we use $r$ to denote the overall iteration number. By the double loop nature of the algorithm, {we can define the total number of outer iterations until iteration $r$ as below:}
\begin{align*}
n_r := \lfloor r/q \rfloor+1, \;\;  \mbox{\rm such that}\;\;  (n_r - 1)q\le r\le n_rq-1.
\end{align*}

Before we formally conduct the analysis, we note three simple facts about Algorithm \ref{Algorithm_Finite}. 

First, {according to \eqref{update:x} and the definition \eqref{def_bar}}, the update rule of the average iterates can be expressed as: 
\begin{align}\label{x-update}
\bar{\bx}^{r}&=\bar{\bx}^{r-1}- \alpha \bar{\by}^{r-1}.
\end{align}

Second, if the iteration $r$ satisfies mod$(r,q)=0$ (that is, when the outer iteration is executed), from \eqref{update:v} and \eqref{update:y} it is easy to check that the 
following relations hold (given $\bv^0=\by^0$):  
\begin{align}\label{y_v_equal_finite}
&\bar{\bv}^r=\frac{1}{m} \sum_{i=1}^m \nabla f^i({\bx}_i^r)=\frac{1}{mn}\sum_{i=1}^m \sum_{k=1}^n  \nabla f^i_k(\bx_i^{r}),\\
&\bar{\by}^r= \bar{\bv}^r, \; \mbox{if}\; \mbox{\rm mod}(r,q)=0.
\end{align}
Third, if mod$(r,q)\ne0$, we have the following relations: 
\begin{align}
\bar{\bv}^{r} &= \frac{1}{m|S_2|} \sum_{i=1}^m   \sum_{j\in S_2} \left[ \nabla f^i_j(\bx_i^{r}) - \nabla f^i_j(\bx_i^{r-1})  \right]+\bar{\bv}^{r-1}, \label{y-update} \\
\bar{\by}^{r}&=\bar{\by}^{r-1}+\bar{\bv}^{r}-\bar{\bv}^{r-1}, \; \mbox{if}\; \mbox{\rm mod}(r,q)\ne 0.  \label{y-update_2}
\end{align}

Next, we outline the proof steps of the convergence rate analysis. 

\noindent\textbf{Step 1}. We first show that the variance of our local and global gradient estimators  can be bounded via $\bx$ and $\by$ iterates. The  bounds to be given below is  tighter than the classical analysis of decentralized stochastic methods, which assume the variance are bounded by some universal constant \cite{tang2018d,lian2017can,jiang2017collaborative}. This is an important step to obtain lower sample/communication complexity, since later we can show that the right-hand-side (RHS) of our bound shrinks as the iteration progresses.

\begin{lemma}(Bounded Variance) \label{bounded_variance}
	Under Assumption \ref{A1} - \ref{A3}, the sequence generated by the inner loop of Algorithm  \ref{Algorithm_Finite} satisfies the following inequalities (for all $(n_r - 1)q \le r \le n_rq - 1$):
	\begin{align}
	\mathbb{E}\|\bar{\by}^r -  \frac{1}{m} \sum_{i=1}^m \nabla f^i({\bx}_i^r)\|^2 &\le  \frac{8 L^2}{m|S_2|}   \sum_{t=(n_r-1)q}^{r}  \mathbb{E}\|  {\bx}^{t}  - \bOne \bar{\bx}^{t}\|^2 +\frac{4 \alpha^2 L^2 }{m|S_2|}   \sum_{t=(n_r-1)q}^{r} \mathbb{E} \|     \by^{t} - \bOne \bar{\by}^{t} \|^2 \label{lemma1-1}\\
	&+\frac{4 \alpha^2 L^2 }{|S_2| }    \sum_{t=(n_r-1)q}^{r} \mathbb{E}  \|     \bar{\by}^{t} \|^2+\mathbb{E} \|\bar{\by}^{(n_r-1)q} -\frac{1}{m} \sum_{i=1}^m      \nabla f^i({\bx_i}^{(n_r-1)q})\|^2. \nonumber\\
	\mathbb{E}\| {\bv}^r -    \nabla f({\bx}^r)\|^2	& \le   \frac{L^2}{|S_2|} \sum_{t=(n_r - 1)q}^{r} \mathbb{E}   \|  \bx^{t+1}  - \bx^{t} \|^2+ \mathbb{E} \| {\bv}^{(n_r - 1)q} -  \nabla f({\bx}^{(n_r - 1)q})\|^2.\label{lemma1-2}
	\end{align}	
\end{lemma}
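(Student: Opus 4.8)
The plan is to prove the two variance bounds separately, handling the $\bv$-estimator first since it is the simpler of the two and its bound feeds into the $\by$-estimator bound.

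\textbf{Bounding $\mathbb{E}\|\bv^r - \nabla f(\bx^r)\|^2$.} The idea is to exploit the recursive SARAH/SPIDER structure of the $\bv$ update on the inner loop. For $\mathrm{mod}(r,q)\ne 0$ write $e^r := \bv^r - \nabla f(\bx^r)$. From \eqref{update:v}, $\bv^r - \bv^{r-1} = \frac{1}{|S_2|}\sum_{j\in S_2}[\nabla f_j(\bx^r)-\nabla f_j(\bx^{r-1})]$, so that $e^r = e^{r-1} + \big(\frac{1}{|S_2|}\sum_{j\in S_2}[\nabla f_j(\bx^r)-\nabla f_j(\bx^{r-1})] - (\nabla f(\bx^r)-\nabla f(\bx^{r-1}))\big)$. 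Conditioned on the iterates up to time $r$ and on $S_2^{r-1}$, the quantity in parentheses is a mean-zero random variable (the sampling is with replacement, so each $j\in S_2$ is uniform over $[n]$), hence it is uncorrelated with $e^{r-1}$, giving $\mathbb{E}\|e^r\|^2 = \mathbb{E}\|e^{r-1}\|^2 + \mathbb{E}\|\text{(noise term)}\|^2$. The noise term is an average of $|S_2|$ i.i.d. centered samples, so its second moment is $\frac{1}{|S_2|}$ times $\mathbb{E}\|\nabla f_j(\bx^r)-\nabla f_j(\bx^{r-1}) - (\nabla f(\bx^r)-\nabla f(\bx^{r-1}))\|^2 \le \mathbb{E}\|\nabla f_j(\bx^r)-\nabla f_j(\bx^{r-1})\|^2 \le L^2 \mathbb{E}\|\bx^r - \bx^{r-1}\|^2$, where the first step drops a variance below a second moment and the second uses Assumption \ref{A1}. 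Unrolling this recursion from $r$ down to $(n_r-1)q$ (at which point $\bv = \nabla f$ exactly, so $e^{(n_r-1)q}$ is the stated residual term — in fact zero, but keeping it general) telescopes the $L^2\|\bx^{t+1}-\bx^t\|^2$ terms and yields \eqref{lemma1-2}.

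\textbf{Bounding $\mathbb{E}\|\bar\by^r - \frac{1}{m}\sum_i \nabla f^i(\bx_i^r)\|^2$.} Here I would first establish, using \eqref{y_v_equal_finite}–\eqref{y-update_2}, that on the inner loop $\bar\by^r - \bar\by^{r-1} = \bar\bv^r - \bar\bv^{r-1}$, and that $\bar\bv^r = \frac{1}{m}\sum_i \nabla f^i(\bx_i^r)$ exactly holds only at outer iterations; in between, $\bar\bv^r$ is a SARAH-type estimator of $\frac{1}{m}\sum_i \nabla f^i(\bx_i^r)$. Defining $g^r := \bar\by^r - \frac{1}{m}\sum_i\nabla f^i(\bx_i^r)$, one gets a recursion $g^r = g^{r-1} + (\bar\bv^r - \bar\bv^{r-1}) - \frac{1}{m}\sum_i(\nabla f^i(\bx_i^r) - \nabla f^i(\bx_i^{r-1}))$, and substituting \eqref{y-update} the last two groups combine into $\frac{1}{m|S_2|}\sum_i\sum_{j\in S_2}[\nabla f_j^i(\bx_i^r)-\nabla f_j^i(\bx_i^{r-1})] - \frac{1}{m}\sum_i(\nabla f^i(\bx_i^r)-\nabla f^i(\bx_i^{r-1}))$, a mean-zero noise term conditionally independent of $g^{r-1}$. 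Taking expectations kills the cross term, and the noise second moment is at most $\frac{1}{m^2|S_2|}\sum_i \mathbb{E}\|\nabla f_j^i(\bx_i^r)-\nabla f_j^i(\bx_i^{r-1})\|^2 \le \frac{L^2}{m^2|S_2|}\sum_i\mathbb{E}\|\bx_i^r-\bx_i^{r-1}\|^2 = \frac{L^2}{m^2|S_2|}\mathbb{E}\|\bx^r-\bx^{r-1}\|^2$. Now the key remaining step is to re-express $\|\bx^r - \bx^{r-1}\|^2$ in terms of the quantities appearing on the RHS of \eqref{lemma1-1}, namely the consensus errors $\|\bx^t - \bOne\bar\bx^t\|^2$, $\|\by^t - \bOne\bar\by^t\|^2$, and the average direction $\|\bar\by^t\|^2$. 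Using \eqref{update:x}, $\bx^r - \bx^{r-1} = (\bW - \bI)\bx^{r-1} - \alpha\by^{r-1} = (\bW-\bI)(\bx^{r-1}-\bOne\bar\bx^{r-1}) - \alpha(\by^{r-1}-\bOne\bar\by^{r-1}) - \alpha\bOne\bar\by^{r-1}$, where I used $(\bW-\bI)\bOne = 0$ (Assumption \ref{A3}). Then $\|\bx^r-\bx^{r-1}\|^2 \le 3\|\bW-\bI\|^2\|\bx^{r-1}-\bOne\bar\bx^{r-1}\|^2 + 3\alpha^2\|\by^{r-1}-\bOne\bar\by^{r-1}\|^2 + 3\alpha^2 m\|\bar\by^{r-1}\|^2$ (with $\|\bW-\bI\|\le 2$ absorbed into the constants to match the stated $8$, $4$, $4$); unrolling $g^r$ from $r$ back to $(n_r-1)q$ — where $g^{(n_r-1)q}$ is exactly the residual term in \eqref{lemma1-1} — and summing these bounds over $t$ gives the claimed inequality. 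I would need to also note a shift of indices ($\bx^{t}$ vs. $\bx^{t-1}$) and use $m^2 \ge m$ to reconcile the $1/m$ versus $1/m^2$ factors.

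\textbf{Main obstacle.} The routine part is the martingale-difference / telescoping argument, which is standard for SARAH-type estimators. The genuinely delicate part is the bookkeeping in the second bound: correctly decomposing $\bx^r - \bx^{r-1}$ through the mixing matrix into the three pieces that appear on the RHS, tracking the constants so they come out as $8L^2/(m|S_2|)$, $4\alpha^2 L^2/(m|S_2|)$, $4\alpha^2 L^2/|S_2|$, and making sure the conditional-independence (unbiasedness of with-replacement sampling, independence of $S_2^r$ across $r$) is invoked at the right conditioning level so the cross terms truly vanish. A secondary subtlety is ensuring the summation ranges $\sum_{t=(n_r-1)q}^{r}$ line up after the index shift between $\|\bx^{t+1}-\bx^t\|^2$ in \eqref{lemma1-2} and $\|\bx^t - \bOne\bar\bx^t\|^2$ in \eqref{lemma1-1}.
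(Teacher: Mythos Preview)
Your proposal follows the same strategy as the paper: a martingale-difference recursion for the SARAH-type estimator, elimination of cross terms by conditional unbiasedness, a Lipschitz bound on the remaining noise, telescoping back to the last outer step, and then rewriting $\|\bx^r-\bx^{r-1}\|^2$ via the update rule. Two bookkeeping points where the paper differs from your sketch. First, for the $\bar\by$-noise the paper obtains $\tfrac{L^2}{m|S_2|}\,\mathbb{E}\|\bx^r-\bx^{r-1}\|^2$ (one power of $m$, not two) by applying Jensen, $\|\tfrac{1}{m}\sum_i a_i\|^2\le\tfrac{1}{m}\sum_i\|a_i\|^2$; in the compact notation the nodes share the sample index $j$, so independence across $i$ is not used and no $m^2\ge m$ weakening is needed. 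Second, to hit exactly the constants $8,4,4$ the paper splits in two stages, $\|\bx^r-\bx^{r-1}\|^2\le 2\|(\bW-\bI)(\bx^{r-1}-\bOne\bar\bx^{r-1})\|^2+2\alpha^2\|\by^{r-1}\|^2$, then uses $\|\bW-\bI\|\le 2$ and $\|\by^{r-1}\|^2\le 2\|\by^{r-1}-\bOne\bar\by^{r-1}\|^2+2\|\bOne\bar\by^{r-1}\|^2$; your single three-term split would instead give $12,3,3$, which yields a valid but not the stated inequality. The index shift you flag is handled in the paper simply by enlarging the summation range with nonnegative terms.
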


\noindent\textbf{Step 2}. We then study the descent on $\mathbb{E} [f(\bar{\bx}^{r})]$, which is the expected value of the cost function evaluated on the average iterates.  

\begin{lemma}(Descent Lemma)\label{Lemma-Descent}
	Suppose Assumptions \ref{A1} - \ref{A3} hold, and for any $r\ge 0$ satisfying mod$(r, q) = 0$, the following holds for some $\epsilon_1\ge 0$:
	\begin{align}\label{epsilon_1}
	\mathbb{E} \left[\|\bar{\by}^{r} -\frac{1}{mn} \sum_{i=1}^m    \sum_{k=1}^n \nabla f^i_k(\bx_i^r)\|^2\right] \le \epsilon_1.
	\end{align}
	Then by applying Algorithm  \ref{Algorithm_Finite}, we have the following relation  for all $r\ge 0$,
	\begin{align}\label{Lemma2}
	\mathbb{E} [f(\bar{\bx}^{r+1})] &\le \mathbb{E}[f(\bar{\bx}^0)] - \left(\frac{\alpha}{2}   - \frac{\alpha^2 L}{2} - \frac{4 \alpha^3 L^2 q}{|S_2|} \right) \sum_{t=0}^{r}\mathbb{E}\|\bar{\by}^t\|^2 \\
	&+ \frac{\alpha L^2}{m}  \sum_{t=0}^{r}\mathbb{E} \| {\bx^t} - \bOne \bar{\bx}^t \|^2 +  \frac{8 \alpha L^2q}{m|S_2|}  \sum_{t=0}^{r}  \mathbb{E} \|  {\bx}^{t}  - \bOne \bar{\bx}^{t}\|^2  +\frac{4 \alpha^3 L^2q }{m|S_2|}   \sum_{t=0}^{r}   \mathbb{E} \|     \by^{t} - \bOne \bar{\by}^{t}\|^2 +\alpha (r+1) \epsilon_1. \nonumber
	\end{align}
\end{lemma}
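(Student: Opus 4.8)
The plan is to start from the standard descent inequality for $L$-smooth functions applied to the sequence of average iterates $\bar{\bx}^r$, which satisfies the clean recursion $\bar{\bx}^{r+1} = \bar{\bx}^r - \alpha \bar{\by}^r$ from \eqref{x-update}. Smoothness of $f$ (the fourth inequality in Assumption \ref{A1}) gives
\begin{equation*}
f(\bar{\bx}^{r+1}) \le f(\bar{\bx}^r) - \alpha \langle \nabla f(\bar{\bx}^r), \bar{\by}^r \rangle + \frac{\alpha^2 L}{2}\|\bar{\by}^r\|^2.
\end{equation*}
The cross term is the crux: I would write $\langle \nabla f(\bar{\bx}^r), \bar{\by}^r\rangle = \frac{1}{2}\|\nabla f(\bar{\bx}^r)\|^2 + \frac{1}{2}\|\bar{\by}^r\|^2 - \frac{1}{2}\|\bar{\by}^r - \nabla f(\bar{\bx}^r)\|^2$, so that after taking expectations the $-\frac{\alpha}{2}\|\bar{\by}^r\|^2$ term starts forming the negative coefficient on $\sum \mathbb{E}\|\bar{\by}^t\|^2$ that appears in \eqref{Lemma2}, while the $+\frac{\alpha}{2}\|\bar{\by}^r - \nabla f(\bar{\bx}^r)\|^2$ term must be controlled.

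Next I would decompose the error $\bar{\by}^r - \nabla f(\bar{\bx}^r)$ into two pieces: the ``gradient estimation error'' $\bar{\by}^r - \frac{1}{m}\sum_i \nabla f^i(\bx_i^r)$ (recall $\frac1m\sum_i\nabla f^i(\bx_i^r)$ equals the full-batch average gradient at the local iterates, by \eqref{y_v_equal_finite}), and the ``consensus error'' $\frac{1}{m}\sum_i \nabla f^i(\bx_i^r) - \nabla f(\bar{\bx}^r) = \frac1m\sum_i(\nabla f^i(\bx_i^r) - \nabla f^i(\bar{\bx}^r))$. The latter is bounded by $\frac{L^2}{m}\|\bx^r - \bOne\bar{\bx}^r\|^2$ via Jensen and Assumption \ref{A1}, which produces the $\frac{\alpha L^2}{m}\sum_t \mathbb{E}\|\bx^t - \bOne\bar{\bx}^t\|^2$ term in \eqref{Lemma2}. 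For the former, I would invoke Lemma \ref{bounded_variance}, specifically \eqref{lemma1-1}: summing that bound over $t = 0,\dots,r$, the telescoped ``residual at the last outer iteration'' terms $\mathbb{E}\|\bar{\by}^{(n_t-1)q} - \frac1m\sum_i\nabla f^i(\bx_i^{(n_t-1)q})\|^2$ are each $\le \epsilon_1$ by hypothesis \eqref{epsilon_1} (using that $\bar{\by} = \bar{\bv} = $ full gradient average at outer iterations), contributing the $\alpha(r+1)\epsilon_1$ term; the double sums $\sum_{t=0}^r \sum_{\tau = (n_t-1)q}^t (\cdot)$ over an inner-loop window of length at most $q$ can be re-indexed and bounded by $q \sum_{t=0}^r (\cdot)$, which yields the factors of $q$ in front of the $\frac{8\alpha L^2 q}{m|S_2|}$, $\frac{4\alpha^3 L^2 q}{m|S_2|}$ and (after bounding $\mathbb{E}\|\bar{\by}^t\|^2$ terms) $\frac{4\alpha^3 L^2 q}{|S_2|}$ coefficients.

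Finally I would collect terms: the $-\frac{\alpha}{2}\sum\mathbb{E}\|\bar{\by}^t\|^2$ from the cross term, the $+\frac{\alpha^2 L}{2}\sum\mathbb{E}\|\bar{\by}^t\|^2$ from the smoothness quadratic, and the $+\frac{4\alpha^3 L^2 q}{|S_2|}\sum\mathbb{E}\|\bar{\by}^t\|^2$ coming from the $\frac{4\alpha^2 L^2}{|S_2|}\sum_\tau \mathbb{E}\|\bar{\by}^\tau\|^2$ piece of \eqref{lemma1-1}, combine into the coefficient $\left(\frac{\alpha}{2} - \frac{\alpha^2 L}{2} - \frac{4\alpha^3 L^2 q}{|S_2|}\right)$; the remaining consensus-error and $\by$-consensus-error sums assemble into the stated terms; and $\bar{\bx}^0$ replaces $\bar{\bx}^r$ by telescoping, dropping the $-\frac12\|\nabla f(\bar{\bx}^r)\|^2 \le 0$ contributions. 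I expect the main obstacle to be bookkeeping the double-summation re-indexing cleanly — ensuring each inner-window sum is correctly dominated by $q$ times a single sum over $t\in\{0,\dots,r\}$ without losing or double-counting the outer-iteration reset terms, and verifying that the residual terms in \eqref{lemma1-1} genuinely match the hypothesis \eqref{epsilon_1} (this requires the identity $\bar{\by}^{(n_r-1)q} = \bar{\bv}^{(n_r-1)q} = \frac{1}{mn}\sum_{i,k}\nabla f^i_k(\bx_i^{(n_r-1)q})$ at outer iterations). The step itself is conceptually routine once Lemma \ref{bounded_variance} is in hand; the care is entirely in the index algebra.
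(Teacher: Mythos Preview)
Your proposal is correct and follows essentially the same route as the paper's proof. The only cosmetic differences are that the paper handles the cross term via $-\alpha\langle\nabla f(\bar\bx^r)-\bar\by^r,\bar\by^r\rangle\le \tfrac{\alpha}{2}\|\nabla f(\bar\bx^r)-\bar\by^r\|^2+\tfrac{\alpha}{2}\|\bar\by^r\|^2$ rather than the polarization identity (your extra $-\tfrac{\alpha}{2}\|\nabla f(\bar\bx^r)\|^2$ is exactly what makes the two equivalent once you drop it), and the paper telescopes inner-loop-by-inner-loop before chaining across outer loops, whereas you telescope over all $t$ at once and bound the double sum directly; both index arguments give the same $q$-factor.
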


A key observation from  \leref{Lemma-Descent} is that, in the RHS of \eqref{Lemma2}, besides the negative term in $\mathbb{E}  \|     \bar{\by}^{k} \|^2$, we also have several extra terms (such as $\mathbb{E}\|  {\bx}^{k}  - \bOne \bar{\bx}^{k}\|^2$ and $\mathbb{E} \|     \by^{k} - \bOne \bar{\by}^{k} \|^2$) that cannot be made negative. Therefore, we need to find some potential function that is strictly descending per iteration.

{Note that $\epsilon_1$ in \eqref{epsilon_1} comes from the variance of $\bv^r$ in estimating the full local gradient at each outer loop $n_r$. For Algorithm \ref{Algorithm_Finite}, where we calculate a full batch gradient per outer loop in step \eqref{y_v_equal_finite}, it is clear that $\epsilon_1 = 0$. However, we still would like to include $\epsilon_1$ in the above result because, later when we analyze the online version (where such a variance will no longer be zero), we can re-use the above result.}

\noindent\textbf{Step 3}. Next, we introduce the contraction property, which combined with  $\mathbb{E} [f(\bar{\bx}^{r})]$ will be used to construct the potential function. 
\begin{lemma}(Iterates Contraction)\label{Lemma-Contraction}
	Using the Assumption \ref{A3} on $\bW$ and applying Algorithm  \ref{Algorithm_Finite}, we have the following contraction property of the iterates:
	\begin{align}
	&\mathbb{E} \|\bx^{r+1}-\bOne\bar{\bx}^{r+1}\|^2 \le (1+\beta)\eta^2 \mathbb{E}\| \bx^r -\bOne\bar{\bx}^r\|^2+(1+\frac{1}{\beta})\alpha^2 \mathbb{E}\|\by^r-\bOne\bar{\by}^r\|^2, \label{x_contraction}\\
	& \mathbb{E}\|\by^{r+1}-\bOne \bar{\by}^{r+1}\|^2
	\le (1+\beta) \eta^2\mathbb{E} \|\by^{r} - \bOne  \bar{\by}^r\|^2 +(1+\frac{1}{\beta}) \mathbb{E} \|{\bv}^{r+1} -  {\bv}^{r} \|^2,\label{y_contraction}
	\end{align}
	where $\beta$ is some constant such that $(1+\beta)\eta^2<1$.
	
	If we further assume for all $r\ge 0$ satisfying mod$(r, q)=0$, the following holds for some $\epsilon_2\ge 0$:
	\begin{align}	\label{epsilon_2}
	\mathbb{E}\|\bv^{r}-\nabla f(\bx^{r})\|^2 \le \epsilon_2.
	\end{align}  
	Then we  have the following bound on successive difference of $\bv$ for all $r\ge 0$:	
	\begin{multline}
	\sum_{t=0}^r \mathbb{E}\|\bv^{t+1}-\bv^{t}\|^2  \le  24L^2 \left( 2\sum_{t=0}^r \mathbb{E} \|   \bx^{t} - \bOne \bar{\bx}^{t} \|^2 +  \alpha^2  \sum_{t=0}^r \mathbb{E}\| \by^t - \bOne \bar{\by}^t\|^2+  \alpha^2  \sum_{t=0}^r \mathbb{E}\|  \bOne \bar{\by}^t\|^2\right)  
	+ 6(r+1)\epsilon_2.\label{v_contraction}
	\end{multline}
\end{lemma}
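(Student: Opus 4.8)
\medskip
\noindent\textbf{Proof strategy.} The statement bundles three inequalities; I would first dispatch the two contraction bounds \eqref{x_contraction}--\eqref{y_contraction}, which are the standard ``perturbed averaging'' estimate, and then build \eqref{v_contraction} on top of them and of \leref{bounded_variance}. Let $\bJ$ denote the averaging operator $\frac1m\bOne\bOne^\T$ acting blockwise on $\mathbb{R}^{md}$. From $\bW\bOne=\bOne$ and symmetry of $\bW$ one gets $\bJ\bW=\bW\bJ=\bJ$, so $\bW-\bJ$ annihilates $\bOne\bar{\bx}^r$ and has operator norm $\eta$. Combining $\bx^{r+1}=\bW\bx^r-\alpha\by^r$ with $\bOne\bar{\bx}^{r+1}=\bOne\bar{\bx}^r-\alpha\bOne\bar{\by}^r$ gives $\bx^{r+1}-\bOne\bar{\bx}^{r+1}=(\bW-\bJ)(\bx^r-\bOne\bar{\bx}^r)-\alpha(\by^r-\bOne\bar{\by}^r)$, and the Young inequality $\|a+b\|^2\le(1+\beta)\|a\|^2+(1+\tfrac1\beta)\|b\|^2$ together with $\|(\bW-\bJ)(\bx^r-\bOne\bar{\bx}^r)\|\le\eta\|\bx^r-\bOne\bar{\bx}^r\|$ yields \eqref{x_contraction}. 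The same computation applied to $\by^{r+1}=\bW\by^r+\bv^{r+1}-\bv^r$, with $\|(\bI-\bJ)(\bv^{r+1}-\bv^r)\|\le\|\bv^{r+1}-\bv^r\|$, gives \eqref{y_contraction}. Both steps are routine.

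For \eqref{v_contraction} the plan is to bound $\mathbb{E}\|\bv^{r+1}-\bv^r\|^2$ according to whether iteration $r+1$ is an inner or an outer step, then sum over $r$. If $\mathrm{mod}(r+1,q)\neq0$, the update \eqref{update:v} gives $\bv^{r+1}-\bv^r=\frac1{|S_2|}\sum_{j\in S_2}[\nabla f_j(\bx^{r+1})-\nabla f_j(\bx^r)]$, so convexity of $\|\cdot\|^2$ and Assumption \ref{A1} give the deterministic estimate $\|\bv^{r+1}-\bv^r\|^2\le L^2\|\bx^{r+1}-\bx^r\|^2$. If $\mathrm{mod}(r+1,q)=0$, then $\bv^{r+1}=\nabla f(\bx^{r+1})$, and I would write $\bv^{r+1}-\bv^r=(\nabla f(\bx^{r+1})-\nabla f(\bx^r))+(\nabla f(\bx^r)-\bv^r)$, bounding the first term by $L^2\|\bx^{r+1}-\bx^r\|^2$ and the second by \leref{bounded_variance}, eq.\ \eqref{lemma1-2}, whose residual $\|\bv^{(n_r-1)q}-\nabla f(\bx^{(n_r-1)q})\|^2$ is controlled by the standing hypothesis \eqref{epsilon_2} at the multiple-of-$q$ index $(n_r-1)q$; this produces $\mathbb{E}\|\bv^{r+1}-\bv^r\|^2\le\tfrac{2L^2}{|S_2|}\sum_{s=(n_r-1)q}^{r}\mathbb{E}\|\bx^{s+1}-\bx^s\|^2+2L^2\mathbb{E}\|\bx^{r+1}-\bx^r\|^2+2\epsilon_2$.

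Summing over $r$, the crucial observation is that the inner-loop blocks $\{(n_t-1)q,\dots,n_tq-1\}$ attached to distinct outer iterations are disjoint and contained in $\{0,\dots,r\}$, so the accumulated Lemma-1 term telescopes into $\le\frac{2L^2}{|S_2|}\sum_{s=0}^r\mathbb{E}\|\bx^{s+1}-\bx^s\|^2$ without an extra factor of $q$, and since there are at most $(r+1)/q+1$ outer iterations the residual collects to $\le 6(r+1)\epsilon_2$; altogether $\sum_{t=0}^r\mathbb{E}\|\bv^{t+1}-\bv^t\|^2\le 5L^2\sum_{t=0}^r\mathbb{E}\|\bx^{t+1}-\bx^t\|^2+6(r+1)\epsilon_2$. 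It then remains to convert $\mathbb{E}\|\bx^{t+1}-\bx^t\|^2$ into the three quantities on the right of \eqref{v_contraction}: from $\bx^{t+1}-\bx^t=(\bW-\bJ)(\bx^t-\bOne\bar{\bx}^t)-(\bx^t-\bOne\bar{\bx}^t)-\alpha\by^t$ and $\eta<1$ one gets $\|\bx^{t+1}-\bx^t\|\le 2\|\bx^t-\bOne\bar{\bx}^t\|+\alpha\|\by^t\|$, and after squaring and splitting $\by^t=(\by^t-\bOne\bar{\by}^t)+\bOne\bar{\by}^t$ this gives $\|\bx^{t+1}-\bx^t\|^2\le 8\|\bx^t-\bOne\bar{\bx}^t\|^2+4\alpha^2\|\by^t-\bOne\bar{\by}^t\|^2+4\alpha^2\|\bOne\bar{\by}^t\|^2$. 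Substituting this into the previous display and absorbing the absolute constants into $24L^2$ (note $5\cdot8=40\le48$ and $5\cdot4=20\le24$) establishes \eqref{v_contraction}.

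I expect the only delicate point to be the bookkeeping in the outer-iteration case: one has to invoke \leref{bounded_variance} at exactly the index $r$, identify its residual with the $\epsilon_2$ of \eqref{epsilon_2}, and argue the disjointness of the inner blocks carefully so that the interval length $q$ cancels rather than multiplies the iterate-difference sum. Mishandling this is precisely what would turn the $\mathcal{O}(m\sqrt n\,\epsilon^{-1})$ sample complexity back into $\mathcal{O}(mn\,\epsilon^{-1})$, so this is where the proof must be tight. The contraction bounds and the conversion of $\|\bx^{t+1}-\bx^t\|^2$ are, by contrast, standard manipulations with the spectral gap of $\bW$ and elementary inequalities.
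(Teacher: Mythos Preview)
Your proposal is correct and follows essentially the same route as the paper: the contraction bounds come from the spectral gap of $\bW$ plus Young's inequality, and the bound on $\sum_t\mathbb{E}\|\bv^{t+1}-\bv^t\|^2$ is obtained by splitting into inner/outer iterations, invoking \leref{bounded_variance} at the outer steps, and then converting $\|\bx^{t+1}-\bx^t\|^2$ via the identity $(\bW-\bI)\bx^t=(\bW-\bI)(\bx^t-\bOne\bar{\bx}^t)$.

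One small difference worth flagging: at an outer step you use the two–term split $\bv^{r+1}-\bv^r=(\nabla f(\bx^{r+1})-\nabla f(\bx^r))+(\nabla f(\bx^r)-\bv^r)$, relying on $\bv^{r+1}=\nabla f(\bx^{r+1})$ exactly. The paper instead uses the three–term split $\bv^{r+1}-\bv^r=(\bv^{r+1}-\nabla f(\bx^{r+1}))+(\nabla f(\bx^{r+1})-\nabla f(\bx^r))+(\nabla f(\bx^r)-\bv^r)$ and bounds the first piece by $\epsilon_2$ as well. Your version gives a slightly sharper constant ($5L^2$ versus the paper's $6L^2$ in front of $\sum_t\|\bx^{t+1}-\bx^t\|^2$) and is perfectly valid for \algref{Algorithm_Finite}, which is what the lemma literally asserts. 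The paper's extra term is there only so that the identical argument can be reused verbatim for \algref{Algorithm_Online}, where $\bv^{r+1}$ is a minibatch estimate and $\bv^{r+1}\neq\nabla f(\bx^{r+1})$; if you later want to cite this lemma in the online analysis, you would need to insert that third term.
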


{Again, $\epsilon_2$ comes from the variance of the estimating the local gradient in each outer loop, and we have $\epsilon_2 = 0$ for Algorithm \ref{Algorithm_Finite}.}
{Note that \eqref{x_contraction} can also be written as following
\begin{align}
	\mathbb{E} \|\bx^{r+1}-\bOne\bar{\bx}^{r+1}\|^2 -  \mathbb{E}\| \bx^r -\bOne\bar{\bx}^r\|^2 \le \left((1+\beta)\eta^2-1\right) \mathbb{E}\| \bx^r -\bOne\bar{\bx}^r\|^2+(1+\frac{1}{\beta})\alpha^2 \mathbb{E}\|\by^r-\bOne\bar{\by}^r\|^2. \label{x_contraction_shrink}
\end{align}
One key observation here is that we have $(1+\beta)\eta^2-1<0$ by properly choosing $\beta$. Therefore, the RHS of the above equation can be made negative by properly selecting the stepsize $\alpha$. }

\noindent\textbf{Step 4}. 
This step combines the descent estimates obtained in Step 2-3 to construct a potential function, by using a conic combination of $\mathbb{E} [f(\bar{\bx}^{r})]$,  $\mathbb{E} \|\bx^{r}-\bOne\bar{\bx}^{r}\|^2$ and $\mathbb{E} \|\by^{r}-\bOne\bar{\by}^{r}\|^2$.

 \begin{lemma}(Potential Function)\label{Lemma-Potential}
	Constructing the following potential function 
	\begin{align*}
	\textsl{H}(\bx^r) & := \mathbb{E} [f(\bar{\bx}^{r})]+ {\frac{1}{m}}\mathbb{E} \|\bx^r-\bOne\bar{\bx}^r\|^2+ {\frac{\alpha }{m}}   \mathbb{E}  \|{\by}^r-\bOne{\bar{\by}}^r\|^2.
	\end{align*}
	Under Assumption \ref{A1} - \ref{A3} and  Algorithm  \ref{Algorithm_Finite}, if we further pick $q=|S_2|$ and {define $\epsilon_1$ and $\epsilon_2$  as  in \eqref{epsilon_1} and \eqref{epsilon_2}}, we have
	\begin{align*}
	&\textsl{H}(\bx^{r+1}) -  \textsl{H}(\bx^0) \le- C_1 \sum_{t=0}^{r}\mathbb{E}\|\bar{\by}^t\|^2  - C_2  \sum_{t=0}^{r}{\frac{1}{m}}\mathbb{E}\| \bx^t -\bOne\bar{\bx}^t\|^2 -C_3 \sum_{t=0}^{r} {\frac{1}{m}}\mathbb{E}\|\by^t-\bOne\bar{\by}^t\|^2 +\epsilon_3,
	\end{align*}
	where
	\begin{align}
	&C_1:= \left(\frac{1}{2}   - \frac{\alpha L}{2} - {4 \alpha^2 L^2 } -24(1+\frac{1}{\beta}) \alpha^2L^2\right)\alpha, \label{C1}\\
	&C_2:=\left( 1 - (1+\beta)\eta^2 - 48\alpha(1+\frac{1}{\beta}) L^2 - {9 \alpha L^2}\right),\label{C2}\\
	&C_3:=\left( 1- (1+\beta) \eta^2-(1+\frac{1}{\beta})\alpha -24(1+\frac{1}{\beta}) \alpha^2L^2 - {4 \alpha^2 L^2 }  \right)\alpha,\label{C3}\\
	&\epsilon_3:= \alpha (r+1) (\epsilon_1+6{\frac{1}{m}}(1+\frac{1}{\beta}) \epsilon_2).
	\end{align}
\end{lemma}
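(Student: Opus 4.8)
\medskip
\noindent\textbf{Proof strategy.}
The plan is to obtain the one-step decrease of $H$ entirely by assembling the three preceding lemmas in their telescoped forms and taking a suitable conic combination. First I would impose $q=|S_2|$ throughout, which collapses every ratio $q/|S_2|$ appearing in \leref{Lemma-Descent}: the coefficient of $\sum_{t=0}^r\mathbb{E}\|\bar{\by}^t\|^2$ in \eqref{Lemma2} becomes $-(\alpha/2-\alpha^2 L/2-4\alpha^3L^2)$, the coefficient of $\sum_{t=0}^r\frac1m\mathbb{E}\|\bx^t-\bOne\bar{\bx}^t\|^2$ becomes $9\alpha L^2$ (merging the $\alpha L^2$ and $8\alpha L^2$ pieces), and the coefficient of $\sum_{t=0}^r\frac1m\mathbb{E}\|\by^t-\bOne\bar{\by}^t\|^2$ becomes $4\alpha^3L^2$.

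Next I would telescope the successive-difference form \eqref{x_contraction_shrink} of \eqref{x_contraction} over $t=0,\dots,r$ to get
\begin{align*}
\mathbb{E}\|\bx^{r+1}-\bOne\bar{\bx}^{r+1}\|^2-\mathbb{E}\|\bx^{0}-\bOne\bar{\bx}^{0}\|^2 &\le \big((1+\beta)\eta^2-1\big)\sum_{t=0}^r\mathbb{E}\|\bx^t-\bOne\bar{\bx}^t\|^2\\
&\quad+\big(1+\tfrac1\beta\big)\alpha^2\sum_{t=0}^r\mathbb{E}\|\by^t-\bOne\bar{\by}^t\|^2,
\end{align*}
and similarly rewrite \eqref{y_contraction} as a successive difference, telescope it, then substitute \eqref{v_contraction} for $\sum_{t=0}^r\mathbb{E}\|\bv^{t+1}-\bv^t\|^2$ and use $\|\bOne\bar{\by}^t\|^2=m\|\bar{\by}^t\|^2$. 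This yields an analogous inequality for $\mathbb{E}\|\by^{r+1}-\bOne\bar{\by}^{r+1}\|^2$ whose right-hand side additionally carries $48(1+\tfrac1\beta)L^2\sum_t\mathbb{E}\|\bx^t-\bOne\bar{\bx}^t\|^2$, $24(1+\tfrac1\beta)\alpha^2L^2\sum_t\mathbb{E}\|\by^t-\bOne\bar{\by}^t\|^2$, $24(1+\tfrac1\beta)\alpha^2mL^2\sum_t\mathbb{E}\|\bar{\by}^t\|^2$, and $6(r+1)\epsilon_2$.

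Then I would add \eqref{Lemma2} to $\frac1m$ times the telescoped $\bx$-contraction and $\frac\alpha m$ times the telescoped $\by$-contraction. The left-hand side is exactly $H(\bx^{r+1})-H(\bx^0)$ once the $t=0$ consensus terms are absorbed into $H(\bx^0)$. Collecting like terms: the $\mathbb{E}\|\bar{\by}^t\|^2$ contributions combine to $-C_1\sum_t\mathbb{E}\|\bar{\by}^t\|^2$ since $-(\alpha/2-\alpha^2L/2-4\alpha^3L^2)+\tfrac\alpha m(1+\tfrac1\beta)\,24\alpha^2mL^2=-C_1$; the $\frac1m\mathbb{E}\|\bx^t-\bOne\bar{\bx}^t\|^2$ contributions combine to $-C_2$ times that sum because $9\alpha L^2+\big((1+\beta)\eta^2-1\big)+48\alpha(1+\tfrac1\beta)L^2=-C_2$; and the $\frac1m\mathbb{E}\|\by^t-\bOne\bar{\by}^t\|^2$ contributions combine to $-C_3$ times that sum because $4\alpha^3L^2+(1+\tfrac1\beta)\alpha^2+\alpha\big((1+\beta)\eta^2-1\big)+24(1+\tfrac1\beta)\alpha^3L^2=-C_3$. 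Finally the residual constants sum to $\alpha(r+1)\epsilon_1+\tfrac\alpha m\,6(1+\tfrac1\beta)(r+1)\epsilon_2=\epsilon_3$, which is the claimed inequality.

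The step I expect to be the main obstacle is not any single calculation but pinning down the combination weights $(1,\tfrac1m,\tfrac\alpha m)$ so that $C_1,C_2,C_3$ can simultaneously be made positive under a constant stepsize. The $\bx$-consensus error must enter with an $O(1)$ weight so that the strictly negative factor $(1+\beta)\eta^2-1$ from \eqref{x_contraction_shrink} can dominate the positive $O(\alpha)$ contributions it picks up from the descent step and from \eqref{v_contraction} routed through the $\by$-contraction; the $\by$-consensus error must enter with an $O(\alpha)$ weight because \eqref{v_contraction} injects an $\alpha^2\sum_t\|\bar{\by}^t\|^2$ term whose weighted coefficient has to stay below the $O(\alpha)$ negative $\|\bar{\by}^t\|^2$ term from \leref{Lemma-Descent}. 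Recognizing that $q=|S_2|$ is exactly what keeps the $q/|S_2|$ factors in \eqref{Lemma2} from destroying this balance, and keeping the coefficient bookkeeping transparent, is the real content; the remainder is substitution and collecting like terms.
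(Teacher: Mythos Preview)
Your proposal is correct and follows essentially the same route as the paper: the paper packages the two consensus terms into an auxiliary $\textsl{P}(\bx^r)=\mathbb{E}\|\bx^r-\bOne\bar\bx^r\|^2+\alpha\,\mathbb{E}\|\by^r-\bOne\bar\by^r\|^2$ and telescopes its one-step decrease before adding the simplified \leref{Lemma-Descent} and \eqref{v_contraction}, whereas you telescope the $\bx$- and $\by$-contractions separately and then combine with weights $(1,\tfrac1m,\tfrac\alpha m)$, but the arithmetic and resulting coefficients are identical. Your additional commentary on why the weights must scale as $(1,\tfrac1m,\tfrac\alpha m)$ is sound and more explicit than the paper's own presentation.
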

\noindent\textbf{Step 5}. We can then properly choose the stepsize $\alpha$, and make $C_1, C_2, C_3$ to be positive. Therefore,  our solution quality measure $\mathbb{E}\|\frac{1}{m} \sum_{i=1}^m \nabla f^i({\bx}_i^r)\|^2 +   {\frac{1}{m}}  \mathbb{E}\|\bx^r-\bOne\bar{\bx}^r\|^2$ can be expressed as the difference of the potential functions and the proof is complete.\\

\begin{theorem}\label{th:main}
	Consider problem \eqref{P2_finite} and under Assumption \ref{A1} - \ref{A3}, if we pick $\alpha = \min \{K_1, K_2, K_3\}$ and $q=|S_2|=\sqrt{n}$, then we have following results by applying Algorithm \ref{Algorithm_Finite},
	\begin{align*}
	\min_{r\in [T]} \mathbb{E}\|\frac{1}{m} \sum_{i=1}^m \nabla f^i({\bx}_i^r)\|^2 +  {\frac{1}{m}}   \mathbb{E}\|\bx^r-\bOne\bar{\bx}^r\|^2 \le C_0  \cdot  \frac{\mathbb{E}f({\bx}^{0}) - \ubar{f}}{T},
	\end{align*}
	where $\ubar{f}$ denotes the lower bound of $f$, and the constants are defined as following
	\begin{align*}
	K_1:= & \frac{-\frac{L}{2}+\sqrt{(\frac{L}{2})^2+48(1+\frac{1}{\beta})L^2+{8L^2}}}{48(1+\frac{1}{\beta})L^2+8L^2}, \\
	K_2:=&\frac{1-(1+\beta)\eta^2}{48(1+\frac{1}{\beta})L^2+{9L^2}}, \\
	K_3:=&\frac{-(1+\frac{1}{\beta})+\sqrt{(1+\frac{1}{\beta})^2+4(1-(1+\beta)\eta^2)(24(1+\frac{1}{\beta})+{4L^2})}}{48(1+\frac{1}{\beta})L^2+{8L^2}},\\
	C_0:=& \left(   \frac{8 \alpha^2 L^2+2}{C_1} + \frac{16 L^2+1}{m C_2}  +\frac{8 \alpha^2 L^2  }{m C_3}  \right),
	\end{align*}
	in which $\eta$ denotes the second largest eigenvalue of the mixing matrix from \eqref{W}, $\beta$ denotes a constant satisfying $1-(1+\beta)\eta^2>0$, and $C_1, C_2, C_3$ are defined in \eqref{C1}-\eqref{C3}.
\end{theorem}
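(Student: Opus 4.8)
The plan is to assemble the four lemmas of Steps~1--4 into a single telescoped inequality, and then convert the resulting summed potential decrease into the claimed bound on the solution‑quality measure. Throughout, since \algref{Algorithm_Finite} recomputes the exact local batch gradient $\nabla f^i(\bx_i^r)$ at every outer iteration, we have $\epsilon_1=\epsilon_2=0$ in \eqref{epsilon_1}--\eqref{epsilon_2}, hence $\epsilon_3=0$ in \leref{Lemma-Potential}; moreover the residual term $\mathbb{E}\|\bar\by^{(n_r-1)q}-\frac1m\sum_i\nabla f^i(\bx_i^{(n_r-1)q})\|^2$ in \eqref{lemma1-1} vanishes because at outer iterations $\bar\by=\bar\bv=\frac1m\sum_i\nabla f^i(\bx_i)$ by \eqref{y_v_equal_finite}. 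As a first step I fix a constant $\beta>0$ with $(1+\beta)\eta^2<1$, which is possible since $\eta<1$ by \asref{A3}; this is exactly the condition under which the contraction factors in \leref{Lemma-Contraction} are strictly below one, and it is assumed implicitly in \eqref{C1}--\eqref{C3}.

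Next I would check that the choice $\alpha=\min\{K_1,K_2,K_3\}$ makes $C_1,C_2,C_3$ strictly positive. Viewed as functions of $\alpha$, $C_1$ and $C_3$ are $\alpha$ times a concave quadratic equal to $\tfrac12$ and $1-(1+\beta)\eta^2$ respectively at $\alpha=0$, while $C_2$ is affine and positive at $\alpha=0$; hence each is positive exactly on an interval $(0,\kappa_i)$, and $K_1,K_2,K_3$ are precisely the corresponding right endpoints, obtained from the quadratic formula using $1-(1+\beta)\eta^2>0$. Taking the smallest of the three thresholds keeps all three constants positive. With $\epsilon_3=0$, summing the one‑step estimate of \leref{Lemma-Potential} over $t=0,\dots,r$ and using $\textsl H(\bx^{r+1})\ge\ubar f$ together with the standard consensus initialization $\bx^0=\bOne\bar\bx^0$, so that $\textsl H(\bx^0)$ reduces to $\mathbb{E}[f(\bx^0)]$, yields, writing $G:=\mathbb{E}[f(\bx^0)]-\ubar f$,
\begin{align*}
C_1\sum_{t=0}^{r}\mathbb{E}\|\bar\by^t\|^2+C_2\sum_{t=0}^{r}\tfrac1m\mathbb{E}\|\bx^t-\bOne\bar\bx^t\|^2+C_3\sum_{t=0}^{r}\tfrac1m\mathbb{E}\|\by^t-\bOne\bar\by^t\|^2\le G,\qquad\forall\, r\ge0 .
\end{align*}
Since the three sums on the left are nonnegative, each is bounded above by $G/C_1$, $G/C_2$, $G/C_3$ respectively.

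The final step converts these into a bound on the target quantity. Using $\|\tfrac1m\sum_i\nabla f^i(\bx_i^r)\|^2\le 2\|\bar\by^r\|^2+2\|\bar\by^r-\tfrac1m\sum_i\nabla f^i(\bx_i^r)\|^2$, I insert the variance bound \eqref{lemma1-1} (with its residual term zero) and sum over $r=0,\dots,T$. Because each inner‑loop block has length at most $q$, after summing over $r$ the windowed sums $\sum_{t=(n_r-1)q}^{r}(\cdot)$ collapse to at most $q$ copies of the full sum $\sum_{t=0}^{T}(\cdot)$; the choice $q=|S_2|=\sqrt n$ makes the prefactor $q/|S_2|=1$, so the accumulated variance is controlled by absolute constants times $\tfrac{L^2}{m}\sum_t\mathbb{E}\|\bx^t-\bOne\bar\bx^t\|^2$, $\tfrac{\alpha^2L^2}{m}\sum_t\mathbb{E}\|\by^t-\bOne\bar\by^t\|^2$ and $\alpha^2L^2\sum_t\mathbb{E}\|\bar\by^t\|^2$ --- precisely the three sums already bounded in the previous paragraph. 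Adding back $\tfrac1m\mathbb{E}\|\bx^r-\bOne\bar\bx^r\|^2$, collecting the coefficients of $G/C_1$, $G/C_2$, $G/C_3$ into the single constant $C_0$ of the statement, using $\min_{r\in[T]}(\cdot)\le\tfrac1T\sum_{r=1}^{T}(\cdot)$, and dividing by $T$ gives the claimed inequality.

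I expect the main obstacle here to be bookkeeping rather than conceptual. The argument is mildly self‑referential: the three quantities $\mathbb{E}\|\bar\by\|^2$, $\mathbb{E}\|\bx-\bOne\bar\bx\|^2$ and $\mathbb{E}\|\by-\bOne\bar\by\|^2$ appear simultaneously as the negative terms driving the potential decrease in \leref{Lemma-Potential} and inside the variance bound of \leref{bounded_variance}, and the proof only closes because the specific choice $q=|S_2|$ makes the coefficients they pick up from \leref{bounded_variance} be $O(L^2)$ and $O(\alpha^2 L^2)$ constants that can be absorbed into $C_0$ instead of competing with $C_1,C_2,C_3$. The care points are: getting the window‑index arithmetic at block boundaries right in the windowed‑to‑global sum reduction, confirming that the residual gradient‑variance term in \eqref{lemma1-1} indeed vanishes at outer iterations, and verifying that $K_1,K_2,K_3$ really are the positive roots of the three quadratics defining $C_1,C_2,C_3$ so that $\alpha=\min\{K_1,K_2,K_3\}$ is a legitimate (positive) stepsize.
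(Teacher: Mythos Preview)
Your proposal is correct and follows essentially the same route as the paper: set $\epsilon_1=\epsilon_2=0$ via \eqref{y_v_equal_finite}, invoke \leref{Lemma-Potential} to get the summed potential decrease \eqref{lemma4_finite}, split $\|\tfrac1m\sum_i\nabla f^i(\bx_i^r)\|^2$ into $2\|\bar\by^r\|^2+2\|\bar\by^r-\tfrac1m\sum_i\nabla f^i(\bx_i^r)\|^2$, telescope \eqref{lemma1-1} using $q=|S_2|$ so the window sums become global sums with unit prefactor, and then collect coefficients into $C_0$ using $\textsl H(\bx^0)=\mathbb{E}[f(\bar\bx^0)]$ and $\textsl H(\bx^{T+1})\ge\ubar f$. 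The only minor wrinkle is that the theorem statement takes $\alpha=\min\{K_1,K_2,K_3\}$ while both you and the paper's proof in fact need $\alpha$ strictly below each threshold to keep $C_1,C_2,C_3>0$ (otherwise $C_0$ is undefined); this is a blemish in the statement itself, not in your argument.
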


By directly applying the above result, we have the upper bound on gradient and communication cost by properly choosing $T$ based on $\epsilon$.
\begin{Corollary} \label{Corollary1}
	To achieve the following $\epsilon$ stationary solution of problem \eqref{P2_finite} by Algorithm \ref{Algorithm_Finite}, 
	\begin{align*}
	\min_{r\in [T]} \mathbb{E}\|\frac{1}{m} \sum_{i=1}^m \nabla f^i({\bx}_i^r)\|^2 +  {\frac{1}{m}} \mathbb{E} \|\bx^r-\bOne\bar{\bx}^r\|^2 \le \epsilon,
	\end{align*}
	{the total number of iterations $T$ and communication rounds required are both in the order of
	$\mathcal{O}({\epsilon^{-1}})$,
	and the total number of samples evaluated across the network is in the order of	
	$\mathcal{O}(mn+{m{n}^{1/2}}{\epsilon^{-1}})$.}
\end{Corollary}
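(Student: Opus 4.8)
The plan is to derive both \thref{th:main} and \coref{Corollary1} from the per-iteration potential decrease in \leref{Lemma-Potential}, so the only real work left is (i) choosing the stepsize, (ii) telescoping, and (iii) translating the bound into the stationarity measure and then into sample/communication counts. For (i): $C_1$ and $C_3$ in \eqref{C1}, \eqref{C3} are $\alpha$ times a quadratic in $\alpha$, while $C_2$ in \eqref{C2} is affine in $\alpha$; solving for the positive roots of these factors gives exactly the thresholds $K_1$, $K_2$, $K_3$ in the theorem (for any fixed $\beta$ with $(1+\beta)\eta^2<1$), so $\alpha=\min\{K_1,K_2,K_3\}$ makes $C_1,C_2,C_3>0$. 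Since \algref{Algorithm_Finite} evaluates the exact local gradient $\nabla f^i(\bx_i^r)$ at every outer iteration, $\epsilon_1$ and $\epsilon_2$ in \eqref{epsilon_1}, \eqref{epsilon_2} both vanish, hence $\epsilon_3=0$ in \leref{Lemma-Potential}.

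For (ii), telescoping \leref{Lemma-Potential} from $0$ to $T$ and using $\textsl{H}(\bx^{T+1})\ge \mathbb{E}[f(\bar\bx^{T+1})]\ge\ubar f$ (the two consensus terms in $\textsl{H}$ are nonnegative) yields
\begin{align*}
C_1\sum_{t=0}^{T}\mathbb{E}\|\bar\by^t\|^2+C_2\sum_{t=0}^{T}\tfrac1m\mathbb{E}\|\bx^t-\bOne\bar\bx^t\|^2+C_3\sum_{t=0}^{T}\tfrac1m\mathbb{E}\|\by^t-\bOne\bar\by^t\|^2\le \textsl{H}(\bx^0)-\ubar f,
\end{align*}
and since every summand is nonnegative and each $C_i>0$, each of the three running sums is at most $(\textsl{H}(\bx^0)-\ubar f)/C_i$. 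For (iii), write $h^r:=\mathbb{E}\|\tfrac1m\sum_i\nabla f^i(\bx_i^r)\|^2+\tfrac1m\mathbb{E}\|\bx^r-\bOne\bar\bx^r\|^2$, bound $\|\tfrac1m\sum_i\nabla f^i(\bx_i^r)\|^2\le 2\|\bar\by^r\|^2+2\|\bar\by^r-\tfrac1m\sum_i\nabla f^i(\bx_i^r)\|^2$, and control the tracking error by \eqref{lemma1-1} (its trailing term vanishes because $\epsilon_1=0$). Summing over $r=0,\dots,T$ and using that every inner-loop window $[(n_r-1)q,r]$ has length at most $q=|S_2|$, so $\sum_{r}\sum_{t=(n_r-1)q}^{r}(\cdot)\le |S_2|\sum_{t}(\cdot)$, expresses $\sum_{r=0}^{T}h^r$ as a fixed nonnegative combination of the three sums just bounded; substituting gives $\sum_{r=0}^{T}h^r\le C_0(\textsl{H}(\bx^0)-\ubar f)$ with $C_0$ as stated, and $\min_{r\in[T]}h^r\le\tfrac1T\sum_{r}h^r$ proves \thref{th:main} (taking $\textsl{H}(\bx^0)=\mathbb{E}f(\bx^0)$ under a consensual start, or absorbing the $\by^0$ term into a constant).

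\coref{Corollary1} is then immediate bookkeeping on top of \thref{th:main}: $\min_{r\in[T]}h^r\le\epsilon$ holds once $T=\mathcal{O}(\epsilon^{-1})$, which---two communication rounds per iteration, broadcasting $\bx_i^{r-1}$ and $\by_i^{r-1}$---is also the communication complexity; the sample count is $mn$ (initialization) $+\lfloor T/q\rfloor\cdot mn$ (outer iterations) $+\Theta(T)\cdot m|S_2|$ (inner iterations), which with $q=|S_2|=\sqrt n$ equals $\mathcal{O}(mn+m\sqrt n\,\epsilon^{-1})$.

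The genuinely hard part is not this final assembly but the lemmas feeding it, above all \leref{Lemma-Potential}: one must choose the weights $1/m$ and $\alpha/m$ in the conic combination $\textsl{H}$ so precisely that the sign-indefinite terms $\mathbb{E}\|\bx^t-\bOne\bar\bx^t\|^2$ and $\mathbb{E}\|\by^t-\bOne\bar\by^t\|^2$ left over from the descent estimate of \leref{Lemma-Descent} are exactly absorbed by the strictly contracting portions of \eqref{x_contraction_shrink} and \eqref{y_contraction} (the latter fed by the $\bv$-difference bound \eqref{v_contraction}), leaving only the three negative coefficients $-C_1,-C_2,-C_3$; and the choice $q=|S_2|$ is precisely what keeps the $q/|S_2|$ factors accumulated in \leref{bounded_variance} and \eqref{v_contraction} of order one. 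A secondary, mechanical hurdle is the double-loop bookkeeping: \leref{bounded_variance} holds only within an inner-loop window $[(n_r-1)q,r]$, so summing it over all $r$ while retaining the variance-reduction gain (rather than losing a factor of $q$) has to be carried out carefully.
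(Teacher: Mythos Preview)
Your proposal is correct and follows essentially the same route as the paper: the paper also deduces \coref{Corollary1} directly from \thref{th:main} by picking $T=\lfloor C_0(\mathbb{E}f(\bx^0)-\ubar f)/\epsilon\rfloor+1$ and then counting $m(\lceil T/q\rceil\cdot n+T|S_2|)$ samples with $q=|S_2|=\sqrt n$, and the proof of \thref{th:main} itself proceeds exactly as you outline (split $\|\tfrac1m\sum_i\nabla f^i(\bx_i^r)\|^2$ via $\bar\by^r$, apply \eqref{lemma1-1} with $\epsilon_1=0$, sum using the window bound $\sum_r\sum_{t=(n_r-1)q}^r(\cdot)\le q\sum_t(\cdot)$, and invoke \leref{Lemma-Potential} with $\epsilon_3=0$). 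Your parenthetical about $\textsl{H}(\bx^0)$ is apt: the paper silently assumes a consensual start so that $\textsl{H}(\bx^0)=\mathbb{E}[f(\bar\bx^0)]$, whereas $\by^0=\nabla f(\bx^0)$ is in general not consensual---your ``absorb into a constant'' is the honest fix.
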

 \begin{proof}
	If we pick $T=\lfloor C_0 \cdot \frac{ \mathbb{E}f({\bx}^{0}) - \ubar{f} }{\epsilon}\rfloor + 1 \ge C_0\cdot \frac{  \mathbb{E}f({\bx}^{0}) - \ubar{f}  }{\epsilon}$, then  we can obtain following from \thref{th:main}
	\begin{align*}
	\min_{r\in [T]} \mathbb{E}\|\frac{1}{m} \sum_{i=1}^m \nabla f^i({\bx}_i^r)\|^2 +   {\frac{1}{m}}\mathbb{E}\|\bx^r-\bOne\bar{\bx}^r\|^2 \le C_0  \cdot  \frac{\mathbb{E}f({\bx}^{0}) - \ubar{f}}{T} \le \epsilon.
	\end{align*}
	
	Therefore, the total  {samples needed} will be the {sum} of outer loop complexity ($\lceil  \frac{T}{q}   \rceil$ times full ($n$) gradient evaluations per node) plus inner loop complexity ({$T$ times $|S_2|$ gradient evaluations per node}), by letting $q=|S_2|=\sqrt{n}$, we conclude that the total samples needed are 
	\begin{align*}
	& m\times \left(\lceil  \frac{T}{q}   \rceil \cdot n + T \cdot |S_2|\right) \nonumber\\
	& \le m\times \left(\left(\frac{T}{\sqrt{n}}+1\right)n + T\sqrt{n} \right)\\
	&\le m\left( n + 2C_0 \cdot \frac{\mathbb{E}f({\bx}^{0}) - \ubar{f} }{\epsilon} \sqrt{n} + 2\sqrt{n}\right)\nonumber\\
	& = \mathcal{O} \left(m\times \left(n+\frac{\sqrt{n}}{\epsilon}\right)\right).
	\end{align*}
	This completes the proof.
\end{proof}

\section{The Online Setting}\label{Section_Online}
In this section, we discuss the online setting \eqref{eq:online} for solving problem \eqref{P2}, where the problem can either be expressed as the following
\begin{align}\label{P2_online}
\min_{\bx \in \mathbb{R}^{md}} f(\bx)=\frac{1}{m}  \sum_{i=1}^m  \mathbb{E}_{\xi\sim\mathcal{D}_i} \left[f^i_{\xi}(\bx_i)\right],
\quad\textrm{s.t.}\quad  \bx_i=\bx_j, \quad\forall (i, j)\in \mathcal{E}, \tag{P2}
\end{align}
where $\xi$ represents the data drawn from the data distribution $\mathcal{D}_i$ at the $i$th node, or in form \eqref{P2_finite} such that the number of samples $n$ is too large to calculate the full batch even occasionally. In either one of these scenarios, full batch evaluations at the local nodes are no longer performed for each outer iteration. 

The above setting has been well-studied for the centralized problem (with large or even infinite number of samples). For example, in SCSG \cite{lei2017non}, a batch size  $\mathcal{O}(\epsilon^{-1})$ is used when the sample size is large or the target accuracy $\mathcal{O}(\epsilon)$ is moderate, improving the rate to $\mathcal{O}(\epsilon^{-5/3})$ from $\mathcal{O}(\epsilon^{-2})$ compared to the vanilla SGD \cite{ghadimi2013stochastic}. Recently, SPIDER \cite{fang2018spider} further improves the results to $ \mathcal{O}(\epsilon^{-3/2})$, while the SpiderBoost \cite{wang2018spiderboost} uses  a constant stepsize and is amenable to solve non-smooth problem at this rate.  
 
\subsection{The Proposed Algorithm}
To begin with, we first introduce two commonly used assumptions in the online learning setting.

\begin{assumption}  \label{A2}
	At each iteration, samples are independently collected, and the stochastic gradient is an unbiased estimate of the true gradient:
	\begin{align}\label{assume-unbiased}
	&\mathbb{E}_{\xi}[  \nabla  f_{\xi}^i(\bx_i)]=\nabla f^i(\bx_i),\forall i.
	\end{align}
\end{assumption}

\begin{assumption}   \label{A4}
	The variance between the stochastic gradient and the true gradient is bounded:
	\begin{align}\label{assume-bounded}
	&\mathbb{E}_{\xi}[  \|\nabla  f_{\xi}^i(\bx_i) - \nabla f^i(\bx_i) \|^2]\le \sigma^2,\forall i.
	\end{align}
	\end{assumption}
 
To present our algorithms, note that compared to problem \eqref{P2_finite}, the main difference of having the expectation in \eqref{P2_online} is that the full batch gradient evaluation is  no longer feasible. Therefore, we need to slightly revise our algorithm in Section \ref{Section_FiniteSum} and redesign the local gradient estimation step (i.e., the $\bv$ update).  Specifically, different from \eqref{update:v} where we sample the full batch, here we randomly draw $S_1$ samples, the size of which is inversely proportional to the desired accuracy $\epsilon$. We have the following updates on $\bv$:
\begin{align} \label{update:v:online}
\begin{cases}
\begin{aligned}
\bv_i^{r} &= \frac{1}{|S_1|} \sum_{\xi\in S_1}  \nabla f^i_\xi(\bx_i^{r}), && \text{mod}(r,q)=0\\
\bv_i^{r} &=  \frac{1}{|S_2|} \sum_{\xi\in S_2} \left[ \nabla f_\xi^i(\bx_i^{r}) - \nabla f_\xi^i(\bx_i^{r-1})  \right]+\bv_i^{r-1}, && \text{mod}(r,q)\ne 0.
\end{aligned}
\end{cases}
\end{align}
It is easy to check that the following relation on average iterates is obvious when mod$(r, q)=0$ and $\bar{\by}^0=\bar{\bv}^0$, 
\begin{align}\label{y_v_equal_online}
 	\bar{\by}^r=\bar{\bv}^r =\frac{1}{m|S_1|}\sum_{i=1}^m \sum_{\xi\in S_1}  \nabla f^i_\xi(\bx_i^{r}).
\end{align}
The rest of the  updates on $\bx$ and $\by$ are same as the finite sum setting;  see  Algorithm \ref{Algorithm_Online} below for details. 
\begin{algorithm}[h]
	\caption{{D-GET Algorithm (global view) (online)}}
	\label{Algorithm_Online}
	\begin{algorithmic}
		\State {\bfseries Input:}  $\bx^0, \alpha, q, |S_1|, |S_2|$
		\State Draw $S_1$ samples with replacement
		\State $\bv^{0} = \frac{1}{|S_1|} \sum_{\xi\in S_1}  \nabla f_\xi(\bx^{0})$, $\by^0 = \bv^0$
		\For{$r=1,2,\ldots$}
		\State $\bx^{r}= \bW \bx^{r-1}-\alpha \by^{r-1}$ \Comment{local communication \& update}
		\If {mod$(r,q)=0$}
		\State Draw $S_1$ samples with replacement
		\State $\bv^{r} = \frac{1}{|S_1|} \sum_{\xi\in S_1}  \nabla f_\xi(\bx^{r})$  \Comment{local gradient estimation}
		\Else
		\State Draw $S_2$ samples with replacement
		\State $\bv^{r}=  \frac{1}{|S_2|} \sum_{\xi\in S_2} \left[ \nabla f_\xi(\bx^{r}) - \nabla f_\xi(\bx^{r-1})  \right]+\bv^{r-1} $   \Comment{local gradient estimation}
		\EndIf
		\State $\by^{r}= \bW \by^{r-1}+\bv^{r}-\bv^{r-1}$    \Comment{global gradient tracking}
		\EndFor
	\end{algorithmic}
\end{algorithm}

\subsection{Convergence Analysis}
The analysis follows the same steps as described in Section \ref{Convergence_Analysis_FS} and it is easy to verify that our \leref{bounded_variance} to \leref{Lemma-Potential}  still hold true for Algorithm \ref{Algorithm_Online}. 
However, for online setting where we no longer sample a full batch, the variance $\epsilon_1$ and $\epsilon_2$ cannot be eliminated.  The lemma given below provides the bounds on $\epsilon_1$ and $\epsilon_2$.

\begin{lemma}(Bounded Variance)\label{online_lemma5}
Under Assumption \ref{A1} to \ref{A4}, the sequence generated by the outer loop of Algorithm \ref{Algorithm_Online} satisfies the following relations (for all $r$ such that mod$(r, q)$=0)
	\begin{align*}
&\mathbb{E} \|{\bv}^{r} - \nabla f({\bx}^{r})\|^2 
\le  \frac{m\sigma^2}{|S_1|},\\
&	\mathbb{E} \|\bar{\by}^{r} -\frac{1}{m} \sum_{i=1}^m      \nabla f^i(\bx_i^r)\|^2 
	\le  \frac{\sigma^2}{|S_1|}.
	\end{align*}
\end{lemma}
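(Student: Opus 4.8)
The plan is to bound each of the two variance quantities directly from the definitions of the estimators at an outer iteration $r$ (i.e., $\text{mod}(r,q)=0$), using only Assumptions~\ref{A2} and \ref{A4} together with the independence of the $|S_1|$ samples drawn at that iteration. Unlike Lemmas~\ref{bounded_variance}--\ref{Lemma-Potential}, this lemma does not involve the recursive accumulation of errors — at an outer iteration the estimator $\bv^r$ is a fresh minibatch average, so the bound is a one-shot computation. The key structural fact to exploit is \eqref{y_v_equal_online}: at an outer iteration, $\bar{\by}^r = \bar{\bv}^r = \frac{1}{m|S_1|}\sum_{i=1}^m\sum_{\xi\in S_1}\nabla f^i_\xi(\bx_i^r)$, so the global tracking error reduces to a statement about the average of $m$ independent minibatch errors, while $\bv^r-\nabla f(\bx^r)$ stacks the $m$ per-node minibatch errors without averaging.

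First I would handle the second inequality. Write $\bar{\by}^r - \frac{1}{m}\sum_i \nabla f^i(\bx_i^r) = \frac{1}{m}\sum_{i=1}^m\big(\frac{1}{|S_1|}\sum_{\xi\in S_1}\nabla f^i_\xi(\bx_i^r) - \nabla f^i(\bx_i^r)\big)$. By Assumption~\ref{A2} each summand is mean-zero, and by independence of the samples across nodes (and within a node's minibatch) the cross terms vanish in expectation, so $\mathbb{E}\|\cdot\|^2 = \frac{1}{m^2}\sum_{i=1}^m \mathbb{E}\big\|\frac{1}{|S_1|}\sum_{\xi\in S_1}\nabla f^i_\xi(\bx_i^r) - \nabla f^i(\bx_i^r)\big\|^2$. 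A standard minibatch-variance argument — expanding the inner square and using independence across the $|S_1|$ samples plus Assumption~\ref{A4} — gives $\frac{\sigma^2}{|S_1|}$ for each node, hence $\frac{1}{m^2}\cdot m\cdot\frac{\sigma^2}{|S_1|} = \frac{\sigma^2}{m|S_1|}$. (I note the statement claims $\frac{\sigma^2}{|S_1|}$; depending on whether one tracks the $\frac1m$ normalization the cleaner bound $\frac{\sigma^2}{|S_1|}$ certainly dominates it, so the stated inequality holds either way.)

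For the first inequality, recall $\bv^r - \nabla f(\bx^r)\in\mathbb{R}^{md}$ is the concatenation over $i$ of $\frac{1}{|S_1|}\sum_{\xi\in S_1}\nabla f^i_\xi(\bx_i^r) - \nabla f^i(\bx_i^r)$, so $\mathbb{E}\|\bv^r-\nabla f(\bx^r)\|^2 = \sum_{i=1}^m \mathbb{E}\big\|\frac{1}{|S_1|}\sum_{\xi\in S_1}\nabla f^i_\xi(\bx_i^r) - \nabla f^i(\bx_i^r)\big\|^2 \le m\cdot\frac{\sigma^2}{|S_1|}$, using the same per-node minibatch-variance bound as above. The only mild subtlety — and the closest thing to an obstacle — is being careful that the point $\bx_i^r$ at which the gradients are evaluated is measurable with respect to the past and independent of the fresh samples drawn at iteration $r$, so that conditioning on $\bx^r$ the unbiasedness and bounded-variance assumptions apply verbatim; this follows from the sampling scheme (fresh $S_1$ drawn after $\bx^r$ is formed) and a routine tower-property argument. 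Everything else is the textbook computation that the variance of an average of $k$ i.i.d.\ mean-zero vectors each with second moment at most $\sigma^2$ is at most $\sigma^2/k$.
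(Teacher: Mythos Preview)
Your proposal is correct and, for the first inequality, essentially coincides with the paper's argument: both decompose $\|\bv^r-\nabla f(\bx^r)\|^2$ node by node and apply the standard $\sigma^2/|S_1|$ minibatch-variance bound at each node.

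For the second inequality your route differs slightly from the paper's. You group the error as $\frac{1}{m}\sum_i e_i$ with $e_i$ the per-node minibatch error, invoke independence \emph{across nodes} (together with within-minibatch independence) to kill the cross terms, and obtain the sharper bound $\sigma^2/(m|S_1|)$, which of course implies the stated $\sigma^2/|S_1|$. The paper instead groups the error sample by sample, writing $u_\xi=\frac{1}{m}\sum_i\big(\nabla f^i_\xi(\bx_i^r)-\nabla f^i(\bx_i^r)\big)$, uses only independence \emph{across the $|S_1|$ samples} to remove cross terms, and then applies Jensen's inequality to the average over $i$ to land exactly on $\sigma^2/|S_1|$. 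Your approach buys an extra factor of $1/m$ but relies on cross-node sampling independence (plausible under Assumption~\ref{A2}, though the paper's notation treats the same index $\xi$ across nodes); the paper's Jensen-based argument avoids that assumption at the cost of the looser constant. Either way the lemma as stated follows.
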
	

By using the above lemma, we can then choose  the sample  size inversely proportional to the targeted accuracy and obtain our final results.

\begin{theorem}\label{th:main2}
	Suppose Assumption \ref{A1} - \ref{A4} hold, and pick the following parameters for problem \eqref{P2_online}:
	$$\alpha = \min \{K_1, K_2, K_3\}, \quad |S_1| = (4C_0\alpha(7+\frac{6}{\beta})\sigma^2+8\sigma^2)/{\epsilon}, \quad q=|S_2|=\sqrt{|S_1|}.$$ 
	Then we have the following result by applying Algorithm \ref{Algorithm_Online},
	\begin{align*}
	\min_{r\in [T]} \mathbb{E}\left\|\frac{1}{m} \sum_{i=1}^m \nabla f^i({\bx}_i^r)\right\|^2 +  {\frac{1}{m}}\mathbb{E}  \|\bx^r-\bOne\bar{\bx}^r\|^2 \le C_0  \cdot  \frac{\mathbb{E}f({\bx}^{0}) - \ubar{f}}{T} + \frac{\epsilon}{2}.
	\end{align*}
\end{theorem}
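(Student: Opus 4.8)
The plan is to run the five-step pipeline of Section~\ref{Convergence_Analysis_FS} essentially unchanged, the only new ingredient being that the two ``noise floors'' $\epsilon_1$ and $\epsilon_2$ appearing in Lemmas~\ref{Lemma-Descent}--\ref{Lemma-Potential} are no longer zero but are quantified by Lemma~\ref{online_lemma5}. First I would note that Lemmas~\ref{bounded_variance}--\ref{Lemma-Potential} were written so that they apply verbatim to Algorithm~\ref{Algorithm_Online}: the inner-loop $\bx$, $\bv$, $\by$ recursions coincide with those of Algorithm~\ref{Algorithm_Finite}, and the averaged identity \eqref{y_v_equal_online} plays precisely the role that \eqref{y_v_equal_finite} played there. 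The only terms that were zero in the finite-sum analysis and are now nonzero are exactly the outer-loop residuals $\mathbb{E}\|\bar\by^{(n_r-1)q}-\tfrac1m\sum_i\nabla f^i(\bx_i^{(n_r-1)q})\|^2$ in \eqref{lemma1-1} and $\mathbb{E}\|\bv^{(n_r-1)q}-\nabla f(\bx^{(n_r-1)q})\|^2$ in \eqref{lemma1-2}/\eqref{epsilon_2}, which Lemma~\ref{online_lemma5} bounds by $\sigma^2/|S_1|$ and $m\sigma^2/|S_1|$ respectively. Thus I may invoke Lemma~\ref{Lemma-Potential} with $q=|S_2|$, $\epsilon_1=\sigma^2/|S_1|$ and $\epsilon_2=m\sigma^2/|S_1|$, which gives $\epsilon_3=\alpha(r+1)\big(1+6(1+\tfrac1\beta)\big)\sigma^2/|S_1|=\alpha(r+1)(7+\tfrac6\beta)\sigma^2/|S_1|$.

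Next, exactly as in the proof of Theorem~\ref{th:main}, the choice $\alpha=\min\{K_1,K_2,K_3\}$ makes $C_1,C_2,C_3>0$ (each $K_i$ is the threshold that renders the corresponding quadratic in \eqref{C1}--\eqref{C3} nonnegative). Telescoping Lemma~\ref{Lemma-Potential} from $t=0$ to $t=T$, using $\textsl{H}(\bx^{T+1})\ge\ubar f$ and the nonnegativity of the consensus terms inside $\textsl{H}$, and writing $D:=\textsl{H}(\bx^0)-\ubar f+\epsilon_3$, I get
\[
C_1\sum_{t=0}^{T}\mathbb{E}\|\bar\by^t\|^2\le D,\qquad C_2\sum_{t=0}^{T}\tfrac1m\mathbb{E}\|\bx^t-\bOne\bar\bx^t\|^2\le D,\qquad C_3\sum_{t=0}^{T}\tfrac1m\mathbb{E}\|\by^t-\bOne\bar\by^t\|^2\le D .
\]
I would then convert this into a bound on the solution-quality measure exactly as in Theorem~\ref{th:main}: apply Lemma~\ref{bounded_variance} (with $q=|S_2|$, so the $q/|S_2|$ prefactors collapse after summing each per-iteration inequality over a full block) together with $\|\tfrac1m\sum_i\nabla f^i(\bx_i^t)\|^2\le 2\|\bar\by^t\|^2+2\|\bar\by^t-\tfrac1m\sum_i\nabla f^i(\bx_i^t)\|^2$, to bound $\sum_{t=0}^{T}\big(\mathbb{E}\|\tfrac1m\sum_i\nabla f^i(\bx_i^t)\|^2+\tfrac1m\mathbb{E}\|\bx^t-\bOne\bar\bx^t\|^2\big)$ by $(2+8\alpha^2L^2)\sum_t\mathbb{E}\|\bar\by^t\|^2+(16L^2+1)\sum_t\tfrac1m\mathbb{E}\|\bx^t-\bOne\bar\bx^t\|^2+8\alpha^2L^2\sum_t\tfrac1m\mathbb{E}\|\by^t-\bOne\bar\by^t\|^2+\mathcal{O}(T\epsilon_1)$, where the $\mathcal{O}(T\epsilon_1)$ collects the residual term of \eqref{lemma1-1} over the $T+1$ iterations. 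Plugging in the three telescoped estimates and dividing by $T$ produces $\min_{r\in[T]}(\cdot)\le C_0\,D/T+\mathcal{O}(\epsilon_1)=C_0\tfrac{\textsl{H}(\bx^0)-\ubar f}{T}+C_0\tfrac{\epsilon_3}{T}+\mathcal{O}(\epsilon_1)$, with $C_0$ exactly as defined in Theorem~\ref{th:main}. Finally, since $\epsilon_3/T\le 2\alpha(7+\tfrac6\beta)\sigma^2/|S_1|$ and $\epsilon_1=\sigma^2/|S_1|$, the entire noise contribution is at most $\big(4C_0\alpha(7+\tfrac6\beta)+8\big)\sigma^2/|S_1|$, and the prescribed $|S_1|=\big(4C_0\alpha(7+\tfrac6\beta)\sigma^2+8\sigma^2\big)/\epsilon$ makes this $\le\epsilon/2$ — which is the claim, after identifying $\textsl{H}(\bx^0)$ with $\mathbb{E}f(\bx^0)$ up to the (constant, initialization-dependent) consensus terms, exactly as is done implicitly in Theorem~\ref{th:main}.

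The step I expect to be the main obstacle is the bookkeeping rather than any new idea: (i) double-checking that Lemmas~\ref{bounded_variance}--\ref{Lemma-Potential} really do carry over to Algorithm~\ref{Algorithm_Online} with the same constants, the only subtlety being the identification of the now-nonzero outer-loop residuals with the quantities controlled by Lemma~\ref{online_lemma5}; and (ii) tracking every $\epsilon_1$- and $\epsilon_2$-dependent term through the telescoping and the $C_0$ combination so that the final coefficient multiplying $\sigma^2/|S_1|$ is no larger than the denominator in the prescribed $|S_1|$, which is what forces the residual error down to $\epsilon/2$. Everything else — the positivity argument for $C_1,C_2,C_3$ under $\alpha=\min\{K_1,K_2,K_3\}$, the telescoping of the potential function, and the variance-to-stationarity conversion — is word-for-word the argument behind Theorem~\ref{th:main}.
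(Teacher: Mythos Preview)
Your proposal is correct and follows essentially the same route as the paper: verify that Lemmas~\ref{bounded_variance}--\ref{Lemma-Potential} carry over to Algorithm~\ref{Algorithm_Online} unchanged, plug in $\epsilon_1=\sigma^2/|S_1|$ and $\epsilon_2=m\sigma^2/|S_1|$ from Lemma~\ref{online_lemma5}, and then repeat the Theorem~\ref{th:main} conversion with the extra $\epsilon_3$ and residual $\epsilon_1$ terms carried along until the choice of $|S_1|$ absorbs them into $\epsilon/2$. The only caveat is the final constant tally: the tight bound on the noise contribution is $(2C_0\alpha(7+\tfrac6\beta)+4)\sigma^2/|S_1|$ (using $(T+1)/T\le 2$ once in each of the two noise terms), which with the stated $|S_1|$ gives exactly $\epsilon/2$; your stated upper bound $(4C_0\alpha(7+\tfrac6\beta)+8)\sigma^2/|S_1|$ is a factor of two too loose and would only yield $\le\epsilon$, so tighten that step when you write it out.
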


\begin{Corollary}\label{Corollary2}
	By using Algorithm \ref{Algorithm_Online}, to achieve the $\epsilon$ stationary solution of problem \eqref{P2}, i.e.,
	\begin{align*}
	\min_{r\in [T]} \mathbb{E}\|\frac{1}{m} \sum_{i=1}^m \nabla f^i({\bx}_i^r)\|^2 +    {\frac{1}{m}}\mathbb{E}\|\bx^r-\bOne\bar{\bx}^r\|^2 \le \epsilon,
	\end{align*}
	the total number of iterations $T$ and communication rounds required are both in the order of
	$\mathcal{O}({\epsilon^{-1}})$,
	and the total sample complexity is in the order of {$\mathcal{O}( m\epsilon^{-3/2})$}.
\end{Corollary}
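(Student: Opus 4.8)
The plan is to prove Corollary~\ref{Corollary2} by combining Theorem~\ref{th:main2} with the specific choice of sample sizes, essentially mirroring the argument of Corollary~\ref{Corollary1} but keeping careful track of the two batch sizes $|S_1|$ and $|S_2|$. First I would invoke Theorem~\ref{th:main2} directly: with $\alpha = \min\{K_1,K_2,K_3\}$, $|S_1| = (4C_0\alpha(7+\tfrac{6}{\beta})\sigma^2 + 8\sigma^2)/\epsilon$, and $q = |S_2| = \sqrt{|S_1|}$, we have
\begin{align*}
\min_{r\in[T]} \mathbb{E}\Big\|\tfrac{1}{m}\sum_{i=1}^m \nabla f^i(\bx_i^r)\Big\|^2 + \tfrac{1}{m}\mathbb{E}\|\bx^r - \bOne\bar{\bx}^r\|^2 \le C_0 \cdot \frac{\mathbb{E}f(\bx^0) - \ubar{f}}{T} + \frac{\epsilon}{2}.
\end{align*}
Then I would choose $T = \lfloor 2C_0(\mathbb{E}f(\bx^0) - \ubar{f})/\epsilon \rfloor + 1 \ge 2C_0(\mathbb{E}f(\bx^0)-\ubar{f})/\epsilon$ so that the first term on the right is at most $\epsilon/2$, giving the desired $\epsilon$-stationarity. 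This immediately yields $T = \mathcal{O}(\epsilon^{-1})$, and since each iteration uses exactly two rounds of communication (broadcasting $\bx_i^{r-1}$ and $\by_i^{r-1}$), the communication complexity is also $\mathcal{O}(\epsilon^{-1})$.

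Next I would count the samples. By construction $|S_1| = \Theta(\sigma^2/\epsilon) = \mathcal{O}(\epsilon^{-1})$, hence $q = |S_2| = \sqrt{|S_1|} = \mathcal{O}(\epsilon^{-1/2})$. Over $T$ iterations, each node performs $\lceil T/q \rceil$ outer iterations, each costing $|S_1|$ sample gradients, and $T$ inner iterations (at most), each costing $|S_2|$ sample gradients. Summing over all $m$ nodes, the total sample complexity is
\begin{align*}
m \cdot \left( \left\lceil \frac{T}{q} \right\rceil |S_1| + T |S_2| \right) \le m \left( \left(\frac{T}{\sqrt{|S_1|}} + 1\right) |S_1| + T\sqrt{|S_1|}\right) = m\left( |S_1| + 2T\sqrt{|S_1|} \right).
\end{align*}
Substituting $|S_1| = \mathcal{O}(\epsilon^{-1})$ and $T = \mathcal{O}(\epsilon^{-1})$ gives $m(\mathcal{O}(\epsilon^{-1}) + \mathcal{O}(\epsilon^{-1})\cdot\mathcal{O}(\epsilon^{-1/2})) = \mathcal{O}(m\epsilon^{-3/2})$, which is the claimed bound. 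The dominant term is the inner-loop cost $T|S_2| = \mathcal{O}(\epsilon^{-1}\cdot\epsilon^{-1/2})$; the choice $q = \sqrt{|S_1|}$ is precisely what balances the outer-loop cost $\frac{T}{q}|S_1| = T\sqrt{|S_1|}$ against the inner-loop cost, so that neither dominates.

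The argument is almost entirely bookkeeping once Theorem~\ref{th:main2} is in hand, so there is no serious obstacle at the level of the corollary itself. The only point requiring mild care is verifying that the definition of $|S_1|$ in Theorem~\ref{th:main2} is self-consistent — $\alpha$, $C_0$, and $\beta$ are all constants independent of $\epsilon$ (they depend only on $L$ and $\eta$), so $|S_1|$ genuinely scales as $\Theta(\epsilon^{-1})$ and the circular-looking dependence of $|S_1|$ on $C_0$ and $\alpha$ is harmless. I would also note that $q = \sqrt{|S_1|}$ should be read as $\lceil \sqrt{|S_1|}\rceil$ (an integer), which only affects constants. The real work — bounding the accumulated errors from $\bv$ and $\by$, constructing the potential function $H(\bx^r)$, and absorbing the residual variance terms $\epsilon_1 = \sigma^2/|S_1|$, $\epsilon_2 = m\sigma^2/|S_1|$ from Lemma~\ref{online_lemma5} into the $\epsilon/2$ slack — has already been done in Lemmas~\ref{bounded_variance}–\ref{Lemma-Potential} and Theorem~\ref{th:main2}; the corollary simply harvests the consequence.
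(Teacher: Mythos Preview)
Your proposal is correct and follows essentially the same approach as the paper's own proof: invoke Theorem~\ref{th:main2}, choose $T$ of order $\epsilon^{-1}$ to make the $C_0(\mathbb{E}f(\bx^0)-\ubar{f})/T$ term at most $\epsilon/2$, and then count samples via $\lceil T/q\rceil|S_1| + T|S_2| \le (T/\sqrt{|S_1|}+1)|S_1| + T\sqrt{|S_1|}$ with $|S_1| = \mathcal{O}(\epsilon^{-1})$. The only cosmetic difference is that the paper sets $T = 2\lfloor C_0(\mathbb{E}f(\bx^0)-\ubar{f})/\epsilon\rfloor + 2$ rather than your $\lfloor 2C_0(\mathbb{E}f(\bx^0)-\ubar{f})/\epsilon\rfloor + 1$, and states the per-node sample count as $\mathcal{O}(\epsilon^{-1} + \epsilon^{-3/2})$ before multiplying by $m$; your additional remarks on the self-consistency of $|S_1|$ and the integer rounding of $q$ are valid but not needed for the argument.
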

\begin{proof}
	
	If we pick the following constants for Algorithm \ref{Algorithm_Online}: 
	$$|S_1| = \frac{4C_0\alpha(7+\frac{6}{\beta})\sigma^2+8\sigma^2}{\epsilon},\; q=|S_2|=\sqrt{|S_1|}, \; T=2\lfloor C_0 \cdot \frac{ \mathbb{E}f({\bx}^{0}) - \ubar{f} }{\epsilon}\rfloor + 2 \ge 2C_0\cdot \frac{  \mathbb{E}f({\bx}^{0}) - \ubar{f}  }{\epsilon}$$ then from \thref{th:main2} we can obtain
	\begin{align*}
	\min_{r\in [T]}\mathbb{E}\|\frac{1}{m} \sum_{i=1}^m \nabla f^i({\bx}_i^r)\|^2 +   {\frac{1}{m}}\mathbb{E}\|\bx^r-\bOne\bar{\bx}^r\|^2 \le \underbrace{C_0  \cdot  \frac{\mathbb{E}f({\bx}^{0}) - \ubar{f}}{T}}_{\le \frac{\epsilon}{2}} + \frac{\epsilon}{2} \le \epsilon.
	\end{align*}
	
	Therefore we have that the per-node sample evaluations are given as  
	\begin{align*}
	\lceil  \frac{T}{q}   \rceil \cdot |S_1| + T \cdot |S_2| \le \left( \frac{T}{\sqrt{|S_1|}} +1\right)|S_1| + T\sqrt{|S_1|} = O(\frac{1}{\epsilon} + \frac{1}{\epsilon^{3/2}}).
	\end{align*}

This completes the proof.
\end{proof}

\section{Concluding Remarks}
In this work, we proposed a joint gradient estimation and tracking approach (D-GET) for  fully decentralized non-convex optimization problems. By utilizing modern variance reduction and gradient tracking techniques, the proposed method improves the sample and/or communication complexities compared with existing methods. In particular, for decentralized finite sum problems, the proposed approach  requires only {$\mathcal{O}(mn^{1/2}\epsilon^{-1})$} sample complexity and $\mathcal{O}(\epsilon^{-1})$ communication complexity to reach the $\epsilon$ stationary solution. 
For online problem, our approach achieves an {$\mathcal{O}(m\epsilon^{-3/2})$} sample  and an $\mathcal{O}(\epsilon^{-1})$ communication complexity, which significantly improves upon the best existing bounds of  {$\mathcal{O}(m\epsilon^{-2})$ and $\mathcal{O}(\epsilon^{-2})$} as derived in \cite{tang2018d, gnsd19}.

{\small
\bibliographystyle{IEEEtran}
\bibliography{ref_VR,ref_Decentralized}}

 \clearpage
 \newpage
 \appendix
 
 In the appendix, we provide a complete theoretical analysis on the convergence of the proposed D-GET algorithm.

 \section{Proof of \leref{bounded_variance}}
 \begin{proof}
 	Define $\mathbb{E} [\cdot | \mathcal{F}_r]$ as the expectation with respect to the random choice of sample $j$, conditioned on $\bx^0, \cdots, \bx^{r}$, $\bv^0, \cdots, \bv^{r-1}$ and $\by^0, \cdots, \by^{r-1}$. 
 	
 	First, we have the following identity (which holds true when $\mbox{\rm mod}(r,q)\ne 0$)
 	\begin{align}\label{C_Expectation}
 	\mathbb{E} [\bar{\bv}^{r}-\bar{\bv}^{r-1} | \mathcal{F}_r] \stackrel{\eqref{y-update}}{=}  \mathbb{E} \left[ \frac{1}{m|S_2|} \sum_{i=1}^m \sum_{j\in S_2} \left[ \nabla f^i_j(\bx_i^{r}) - \nabla f^i_j(\bx_i^{r-1})  \right]   \bigg| \mathcal{F}_r\right] =   \frac{1}{m} \sum_{i=1}^m  \left[ \nabla f^i(\bx_i^{r}) - \nabla f^i(\bx_i^{r-1})  \right].
 	\end{align}
 	To see why the second equality holds, note that when $\bx^{r}, \by^{r-1}, \bv^{r-1}$ are known and fixed, the second expectation is taken over the random selection of $S_2$. The second equality follows because $S_2$ is sampled from $[n]$ uniformly with replacement, and it is an unbiased estimate of the averaged gradient.
 	
 	Second, it is straightforward to obtain the following equality,
 	\begin{align}
 	\norm{ \bar{\by}^r -  \frac{1}{m} \sum_{i=1}^m \nabla f^i({\bx}_i^r)}^2	  &\stackrel{\eqref{y-update_2}}{=}
 	\left \| \bar{\by}^{r-1}+\bar{\bv}^{r}-\bar{\bv}^{r-1}-\frac{1}{m} \sum_{i=1}^m \nabla f^i({\bx}_i^{r})\right \|^2 \nonumber\\
 	& =\left\|\bar{\by}^{r-1} -\frac{1}{m} \sum_{i=1}^m \nabla f^i({\bx}_i^{r-1})\right\|^2 + \left\| \bar{\bv}^{r}-\bar{\bv}^{r-1}+\frac{1}{m} \sum_{i=1}^m \nabla f^i({\bx}_i^{r-1}) -\frac{1}{m} \sum_{i=1}^m \nabla f^i({\bx}_i^{r})\right\|^2 \nonumber\\
 	&+2\left\langle \bar{\by}^{r-1} -\frac{1}{m} \sum_{i=1}^m \nabla f^i({\bx}_i^{r-1}),  \bar{\bv}^{r}-\bar{\bv}^{r-1}+\frac{1}{m} \sum_{i=1}^m \nabla f^i({\bx}_i^{r-1}) -\frac{1}{m} \sum_{i=1}^m \nabla f^i({\bx}_i^{r}) \right\rangle, \label{cross_term}
 	\end{align}
 	where in the second equality, we add and subtract a term $\frac{1}{m} \sum_{i=1}^m \nabla f^i({\bx}_i^{r-1})$.
 	
 	The  cross term in \eqref{cross_term} can be eliminated if we take the conditional expectation conditioning on  $\mathcal{F}_r$. Since under $\mathcal{F}_r$, we have $\bx^r, \bx^{r-1}, \bv^{r-1}, \by^{r-1}, \bar{\by}^{r-1}$ are all known and fixed. Further applying \eqref{C_Expectation} we have
 	\begin{align}\label{eq:unbiased}
 	\mathbb{E} \left[\bar{\bv}^{r}-\bar{\bv}^{r-1}+\frac{1}{m} \sum_{i=1}^m \nabla f^i({\bx}_i^{r-1}) -\frac{1}{m} \sum_{i=1}^m \nabla f^i({\bx}_i^{r})\bigg|\mathcal{F}_r\right] = 0.
 	\end{align}

 	Further taking the full expectation on \eqref{cross_term} we have 
 	\begin{align}\label{bounded_ybar_per_iterates}
 	&\mathbb{E}\left\|\bar{\by}^r -  \frac{1}{m} \sum_{i=1}^m \nabla f^i({\bx}_i^r)\right\|^2	\stackrel{\eqref{eq:finite:sum}\eqref{y-update}\eqref{eq:unbiased}}{=}    \mathbb{E} \left\|\bar{\by}^{r-1} -\frac{1}{m} \sum_{i=1}^m   \nabla f^i({\bx_i}^{r-1})\right\|^2 \\
 	&+\mathbb{E} \left\|\frac{1}{m|S_2|} \sum_{i=1}^m  \sum_{j\in S_2}   \nabla f_j^i(\bx_i^{r})   -  \frac{1}{m|S_2|} \sum_{i=1}^m  \sum_{j\in S_2}  \nabla f_j^i(\bx_i^{r-1})+\frac{1}{mn} \sum_{i=1}^m    \sum_{k=1}^n \nabla f^i_k({\bx_i}^{r-1})- \frac{1}{mn} \sum_{i=1}^m    \sum_{k=1}^n \nabla f^i_k({\bx}_i^r) \right\|^2.\nonumber
 	\end{align}
 	
 	Since each $j\in S_2$ is sampled from $[n]$ uniformly, we have the following conditional expectation:
 	\begin{align} \label{cross_exp_0_new}
 	 \mathbb{E}_j\left[ \frac{1}{m} \sum_{i=1}^m    \nabla f_j^i(\bx_i^{r}) -  \frac{1}{m} \sum_{i=1}^m    \nabla f_j^i(\bx_i^{r-1})  - \frac{1}{mn} \sum_{i=1}^m    \sum_{k=1}^n \nabla f^i_k({\bx}_i^r) +\frac{1}{mn} \sum_{i=1}^m    \sum_{k=1}^n \nabla f^i_k({\bx_i}^{r-1}) \bigg| \mathcal{F}_r\right]=0.
 	 \end{align}
 	 
 	 Then the second term of RHS of \eqref{bounded_ybar_per_iterates}
can be further bounded through following (where $\mathbb{E}[\cdot]$ is the full expectation)
 	\begin{align}\label{bounded_ybar_via_x}
 	& \mathbb{E} \left\|\frac{1}{m|S_2|} \sum_{i=1}^m  \sum_{j\in S_2}   \nabla f_j^i(\bx_i^{r}) -  \frac{1}{m|S_2|} \sum_{i=1}^m  \sum_{j\in S_2}  \nabla f_j^i(\bx_i^{r-1})  - \frac{1}{mn} \sum_{i=1}^m    \sum_{k=1}^n \nabla f^i_k({\bx}_i^r) +\frac{1}{mn} \sum_{i=1}^m    \sum_{k=1}^n \nabla f^i_k({\bx_i}^{r-1})\right\|^2 \nonumber\\
 	  = & \frac{1}{|S_2|^2} \mathbb{E}    \left\| \sum_{j\in S_2} \left(\frac{1}{m} \sum_{i=1}^m    \nabla f_j^i(\bx_i^{r}) -  \frac{1}{m} \sum_{i=1}^m    \nabla f_j^i(\bx_i^{r-1})  - \frac{1}{mn} \sum_{i=1}^m    \sum_{k=1}^n \nabla f^i_k({\bx}_i^r) +\frac{1}{mn} \sum_{i=1}^m    \sum_{k=1}^n \nabla f^i_k({\bx_i}^{r-1})\right)\right\|^2 \nonumber \\
 	\stackrel{(i)} = & \frac{1}{|S_2|^2}   \sum_{j\in S_2}  \mathbb{E} \left\|\frac{1}{m} \sum_{i=1}^m    \nabla f_j^i(\bx_i^{r}) -  \frac{1}{m} \sum_{i=1}^m    \nabla f_j^i(\bx_i^{r-1})  - \frac{1}{mn} \sum_{i=1}^m    \sum_{k=1}^n \nabla f^i_k({\bx}_i^r) +\frac{1}{mn} \sum_{i=1}^m    \sum_{k=1}^n \nabla f^i_k({\bx_i}^{r-1})\right\|^2  \nonumber\\
 	\stackrel{(ii)} \le & \frac{1}{|S_2|}\mathbb{E} \left\|\frac{1}{m} \sum_{i=1}^m    \nabla f_j^i(\bx_i^{r}) -  \frac{1}{ m} \sum_{i=1}^m     \nabla f_j^i(\bx_i^{r-1}) \right\|^2 \nonumber\\
 	\stackrel{(iii)} \le & \frac{1}{m|S_2|}    \mathbb{E} \left[ \sum_{i=1}^m  \| \nabla f_j^i(\bx_i^{r}) -       \nabla f_j^i(\bx_i^{r-1}) \|^2 \right]\nonumber\\
 	\stackrel{(iv)} \le &  \frac{L^2}{m|S_2|}      \mathbb{E} \|  \bx^{r} -\bx^{r-1} \|^2, 
 	\end{align}	
 	In step $(i)$ of the above relation, we use the fact that for two random variables $u_i$, $u_j$ which are independent conditioning on $\mathcal{F}$, the following holds
 	\begin{align}
 	\mathbb{E}[\langle u_\ell, u_j\rangle] = \mathbb{E}_{\mathcal{F}}\mathbb{E}[\langle u_\ell, u_j\rangle\mid \mathcal{F}] = \mathbb{E}_{\mathcal{F}}\langle \mathbb{E}[u_\ell\mid \mathcal{F}], \mathbb{E}[u_j\mid \mathcal{F}]\rangle .
 	\end{align}
 	Plugging $u_j$ as below and $u_{\ell}$ similarly,
 	\begin{align}
 	u_j = \frac{1}{m} \sum_{i=1}^m    \nabla f_j^i(\bx_i^{r}) -  \frac{1}{m} \sum_{i=1}^m    \nabla f_j^i(\bx_i^{r-1})  - \frac{1}{mn} \sum_{i=1}^m    \sum_{k=1}^n \nabla f^i_k({\bx}_i^r) +\frac{1}{mn} \sum_{i=1}^m    \sum_{k=1}^n \nabla f^i_k({\bx_i}^{r-1}) 
 	\end{align}
 	and note that \eqref{cross_exp_0_new} holds true, we can show that  the cross terms in the step before $(i)$ can all be eliminated. 
 	 In step $(ii)$ of \eqref{bounded_ybar_via_x}, we use the property that  $\mathbb{E}\|w_j-\mathbb{E}(w_j)\|^2\le \mathbb{E}\|w_j\|^2$ and $\mathbb{E}\|w_j\|^2=\mathbb{E}\|w_k\|^2$ with $w_j:= \frac{1}{m} \sum_{i=1}^m    \nabla f_j^i(\bx_i^{r}) -  \frac{1}{m} \sum_{i=1}^m    \nabla f_j^i(\bx_i^{r-1})$; in $(iii)$ we use Jensen's inequality, and the last inequality $(iv)$  follows Assumption \ref{A1}.

 	Therefore, by combining \eqref{bounded_ybar_per_iterates} and \eqref{bounded_ybar_via_x},  we have for all $(n_r - 1)q +1 \le r \le n_rq - 1$,
 	\begin{align}\label{lemma1_per_iterates}
 	\mathbb{E}\left\|\bar{\by}^r -  \frac{1}{m} \sum_{i=1}^m \nabla f^i({\bx}_i^r)\right\|^2   \le  \mathbb{E} \left\|\bar{\by}^{r-1} -\frac{1}{m} \sum_{i=1}^m  \nabla f^i({\bx_i}^{r-1})\right\|^2  + \frac{L^2}{m|S_2|}      \mathbb{E} \left\|  \bx^{r} -\bx^{r-1} \right\|^2.
 	\end{align}
 	
 Next, note that we have the following bound on $\mathbb{E} \|  \bx^{r} -\bx^{r-1} \|^2$ for all $r\ge 1$:
 	 	\begin{align}
 	 	\mathbb{E} \|\bx^{r}-\bx^{r-1}\|^2 &\stackrel{\eqref{update:x}}{=} \mathbb{E} \|   \bW\bx^{r-1} -\alpha \by^{r-1} - \bx^{r-1} \|^2\nonumber\\
 	 	&\stackrel{(i)} \le 2 \mathbb{E} \|   \bW\bx^{r-1} - \bx^{r-1} \|^2 +2 \alpha^2  \mathbb{E}\| \by^{r-1}\|^2 \nonumber\\
 	 	&\stackrel{(ii)} \le  2  \mathbb{E}\|   ( \bW - \bI) ( \bx^{r-1} -  \bOne \bar{\bx}^{r-1} ) \|^2  +2\alpha^2  \mathbb{E} \| \by^{r-1}\|^2 \nonumber\\
 	 	&\stackrel{(iii)} \le  8  \mathbb{E} \|   \bx^{r-1} - \bOne \bar{\bx}^{r-1} \|^2 +4 \alpha^2  \mathbb{E}\| \by^{r-1} - \bOne \bar{\by}^{r-1}\|^2+4 \alpha^2  \mathbb{E}\|  \bOne \bar{\by}^{r-1}\|^2, \; \forall~r\ge 1 \label{eq:diff:x}
 	 	\end{align}
 	 	where in $(i)$ we apply the Cauchy-Schwarz inequality,  $(ii)$ follows that $\bW \bOne=\bOne$ from Assumption \ref{A3}, and $(iii)$ applies the fact $\|\bW-\bI\|\le \|\bW\|+\|\bI\|\le 2$ (due to Assumption \ref{A3} and the Cauchy-Schwarz inequality).

 	Telescoping the above inequality 	\eqref{lemma1_per_iterates} 
 	 over the $n_r$-th inner loop, that is from $(n_r - 1)q + 1$ to $r$, we obtain the following series of inequalities 
 	\begin{align*}
 	& \mathbb{E}\|\bar{\by}^r -  \frac{1}{m} \sum_{i=1}^m \nabla f^i({\bx}_i^r)\|^2 \\
 	\le &  \frac{L^2}{m|S_2|}  \sum_{t=(n_r-1)q+1}^{r}      \mathbb{E} \|  \bx^{t} -\bx^{t-1} \|^2
 	+  \mathbb{E} \|\bar{\by}^{(n_r-1)q} -\frac{1}{m} \sum_{i=1}^m      \nabla f^i({\bx_i}^{(n_r-1)q})\|^2\\
 	\stackrel{\eqref{eq:diff:x}}  \le & \frac{8 L^2}{m|S_2|}    \sum_{t=(n_r-1)q+1}^{r}   \mathbb{E}\|  {\bx}^{t-1}  - \bOne \bar{\bx}^{t-1}\|^2 +\frac{4 \alpha^2 L^2 }{m|S_2|}   \sum_{t=(n_r-1)q+1}^{r}  \mathbb{E}  \|     \by^{t-1} - \bOne \bar{\by}^{t-1} \|^2\\
 	&+\frac{4\alpha^2 L^2 }{m|S_2|}   \sum_{t=(n_r-1)q+1}^{r}   \mathbb{E} \|     \bOne \bar{\by}^{t-1} \|^2+\mathbb{E} \|\bar{\by}^{(n_r-1)q} -\frac{1}{m} \sum_{i=1}^m     \nabla f^i({\bx_i}^{(n_r-1)q})\|^2\\
 	\stackrel{(i)} \le & \frac{8 L^2}{m|S_2|}   \sum_{t=(n_r-1)q}^{r}  \mathbb{E}\|  {\bx}^{t}  - \bOne \bar{\bx}^{t}\|^2 +\frac{4 \alpha^2 L^2 }{m|S_2|}   \sum_{t=(n_r-1)q}^{r} \mathbb{E} \|     \by^{t} - \bOne \bar{\by}^{t} \|^2 \\
 	&+\frac{4 \alpha^2 L^2 }{|S_2| }    \sum_{t=(n_r-1)q}^{r} \mathbb{E}  \|     \bar{\by}^{t} \|^2+\mathbb{E} \|\bar{\by}^{(n_r-1)q} -\frac{1}{m} \sum_{i=1}^m   \nabla f^i({\bx_i}^{(n_r-1)q})\|^2,
 	\end{align*}
 	where 
 	 in $(i)$ we change the index in the summation, and add three non-negative terms (one for each sum). This concludes the first part of this lemma.

 	Next we show that  \eqref{lemma1-2} holds true. First, by using the same argument as in \eqref{C_Expectation}, we can obtain the following
 	\begin{align*}
 	\mathbb{E} [ {\bv}^{r}- {\bv}^{r-1} | \mathcal{F}_{r}] \stackrel{\eqref{update:v}}=  \mathbb{E} \left[ \frac{1}{|S_2|}  \sum_{j\in S_2} \left[ \nabla f_j(\bx^{r}) - \nabla f_j(\bx^{r-1})  \right]   \bigg| \mathcal{F}_{r}\right] =     \nabla f (\bx^{r}) - \nabla f (\bx^{r-1}).
 	\end{align*}
 	By using the above fact, and that conditioning on $\mathcal{F}_r$, $\bx^r, \bx^{r-1}$ and $\bv^{r-1}$, we obtain the following:
 	\begin{align}\label{eq:v:unbiased}
 	\mathbb{E} [\langle {\bv}^{r-1} -  \nabla f({\bx}^{r-1}), {\bv}^{r}-{\bv}^{r-1} -  \nabla f({\bx}^r)+  \nabla f({\bx}^{r-1}) \rangle|\mathcal{F}_r] = 0.
 	\end{align}
 	Then it is straightforward to obtain following:
 	\begin{align*}
 	\mathbb{E}\| {\bv}^r -    \nabla f({\bx}^r)\|^2	
 &	 =    \mathbb{E} \| {\bv}^{r-1} -  \nabla f({\bx}^{r-1}) +\bv^r-\bv^{r-1}-  \nabla f({\bx}^r)+  \nabla f({\bx}^{r-1})\|^2\\
 & \stackrel{\eqref{eq:v:unbiased}}=    \mathbb{E} \| {\bv}^{r-1} -  \nabla f({\bx}^{r-1})\|^2  + \mathbb{E}\|\bv^r-\bv^{r-1} -  \nabla f({\bx}^r)+  \nabla f({\bx}^{r-1})\|^2\\
 & \stackrel{\eqref{update:v}}=    \mathbb{E} \| {\bv}^{r-1} -  \nabla f({\bx}^{r-1})\|^2  +\mathbb{E} \|\frac{1}{ |S_2|}   \sum_{j\in S_2}   \nabla f_j(\bx^{r})   -  \frac{1}{ |S_2|}  \sum_{j\in S_2}  \nabla f_j(\bx^{r-1}) -  \nabla f({\bx}^r)+  \nabla f({\bx}^{r-1})\|^2\\
 	&\stackrel{(i)} \le  \mathbb{E} \| {\bv}^{r-1} -  \nabla f({\bx}^{r-1})\|^2  + \frac{1}{|S_2|} \mathbb{E}   \|  \nabla f_j(\bx^{r})   -   \nabla f_j(\bx^{r-1})\|^2\\
 	&\stackrel{(ii)} \le  \mathbb{E} \| {\bv}^{r-1} -  \nabla f({\bx}^{r-1})\|^2  + \frac{L^2}{|S_2|} \mathbb{E}   \|  \bx^{r}  - \bx^{r-1} \|^2,
 	\end{align*}
 	where (i) and (ii) follow similar arguments as in \eqref{bounded_ybar_via_x}.
 	 
 	Telescoping the above inequality over $r$ from $(n_r - 1)q + 1$ to $r$, we obtain that
 	\begin{align*}
 	\mathbb{E}\| {\bv}^r -    \nabla f({\bx}^r)\|^2	
 	&\le  \mathbb{E} \| {\bv}^{(n_r - 1)q} -  \nabla f({\bx}^{(n_r - 1)q})\|^2  + \frac{L^2}{|S_2|} \sum_{t=(n_r - 1)q+1}^{r} \mathbb{E}   \|  \bx^{t}  - \bx^{t-1} \|^2\\
 	&\le  \mathbb{E} \| {\bv}^{(n_r - 1)q} -  \nabla f({\bx}^{(n_r - 1)q})\|^2  + \frac{L^2}{|S_2|} \sum_{t=(n_r - 1)q}^{r} \mathbb{E}   \|  \bx^{t+1}  - \bx^{t} \|^2.
 	\end{align*}
 	This completes the proof of the second part of this lemma. 
 \end{proof}
 
 \section{Proof of \leref{Lemma-Descent}}
 \begin{proof}
 	We first establish the relation of function values between the iterates. According to the gradient Lipschitz continuity Assumption \ref{A1}, we have
 	\begin{align*}
 	f(\bar{\bx}^{r+1})&\le f(\bar{\bx}^r)+\langle\nabla f(\bar{\bx}^r),\bar{\bx}^{r+1}-\bar{\bx}^r\rangle+\frac{L}{2}\|\bar{\bx}^{r+1}-\bar{\bx}^r\|^2\\
 	&	\stackrel{(i)}= f(\bar{\bx}^r)-\alpha \langle \nabla f(\bar{\bx}^r), \bar{\by}^r \rangle + \frac{\alpha^2 L}{2} \|\bar{\by}^r\|^2\\
 	&	\stackrel{(ii)}= f(\bar{\bx}^r)-\alpha \langle \nabla f(\bar{\bx}^r)  -\bar{\by}^r, \bar{\by}^r \rangle -\alpha \|  \bar{\by}^r \|^2 + \frac{\alpha^2 L}{2} \|\bar{\by}^r\|^2\\
 	&	\stackrel{(iii)}\le f(\bar{\bx}^r)+\frac{\alpha}{2}\| \nabla f(\bar{\bx}^r)  -\bar{\by}^r\|^2- \frac{\alpha}{2} \|\bar{\by}^r \|^2   + \frac{\alpha^2 L}{2} \|\bar{\by}^r\|^2\\
 	&	\stackrel{(iv)}\le f(\bar{\bx}^r) - \left(\frac{\alpha}{2}   - \frac{\alpha^2 L}{2} \right)\|\bar{\by}^r\|^2 +{\alpha}\| \nabla f(\bar{\bx}^r) -\frac{1}{m} \sum_{i=1}^m \nabla f^i(\bx_i^r)\|^2 +{\alpha}\| \frac{1}{m} \sum_{i=1}^m \nabla f^i(\bx_i^r) -\bar{\by}^r\|^2,
 	\end{align*}
 	where we simply plug in the iterates \eqref{x-update} in $(i)$, add and subtract a term $\bar{\by}^r$ in $(ii)$, and apply the Cauchy-Schwarz inequality in $(iii)$ and $(iv)$.

 	Then the third term can be further quantified as below,
 	\begin{align*}
 	&\| \nabla f(\bar{\bx}^r) -\frac{1}{m} \sum_{i=1}^m \nabla f^i(\bx_i^r)\|^2 \stackrel{(i)}\le \frac{1}{m} \sum_{i=1}^m  \| \nabla f^i(\bar{\bx}^r) - \nabla f^i(\bx_i^r)\|^2 \stackrel{(ii)} \le \frac{1}{m}\sum_{i=1}^m  L^2 \| {\bx_i^r} - \bar{\bx}_i^r \|^2 = \frac{L^2}{m}   \| {\bx^r} - \bOne \bar{\bx}^r \|^2
 	\end{align*}
 	where in $(i)$ we use the Jensen's inequality and in $(ii)$ we use the Lipschitz Assumption \ref{A1}.

 	Taking expectation on both sides and combine with \eqref{lemma1-1} in Lemma \ref{bounded_variance}, we have 
 	\begin{align*}
 	\mathbb{E}[f(\bar{\bx}^{r+1})] &\le \mathbb{E} [f(\bar{\bx}^r)] - \left(\frac{\alpha}{2}   - \frac{\alpha^2 L}{2} \right)\mathbb{E}\|\bar{\by}^r\|^2 + \frac{\alpha L^2}{m}  \mathbb{E} \| {\bx^r} - \bOne \bar{\bx}^r \|^2 +  \frac{8 \alpha L^2}{m|S_2|}   \sum_{t=(n_r-1)q}^{r}  \mathbb{E}\|  {\bx}^{t}  - \bOne \bar{\bx}^{t}\|^2 \\
 	&+\frac{4 \alpha^3 L^2 }{m|S_2|}   \sum_{t=(n_r-1)q}^{r} \mathbb{E} \|     \by^{t} - \bOne \bar{\by}^{t} \|^2+\frac{4 \alpha^3 L^2 }{|S_2| }    \sum_{t=(n_r-1)q}^{r} \mathbb{E}  \|     \bar{\by}^{t} \|^2 + \alpha \mathbb{E} \|\bar{\by}^{(n_r-1)q} -\frac{1}{m} \sum_{t=1}^m    \nabla f^t({\bx_t}^{(n_r-1)q})\|^2\\
 	&\le \mathbb{E} [f(\bar{\bx}^r)] - \left(\frac{\alpha}{2}   - \frac{\alpha^2 L}{2} \right)\mathbb{E}\|\bar{\by}^r\|^2 + \frac{\alpha L^2}{m}  \mathbb{E} \| {\bx^r} - \bOne \bar{\bx}^r \|^2 +  \frac{8 \alpha L^2}{m|S_2|}   \sum_{t=(n_r-1)q}^{r}  \mathbb{E}\|  {\bx}^{t}  - \bOne \bar{\bx}^{t}\|^2 \\
 	&+\frac{4 \alpha^3 L^2 }{m|S_2|}   \sum_{t=(n_r-1)q}^{r} \mathbb{E} \|     \by^{t} - \bOne \bar{\by}^{t} \|^2+\frac{4 \alpha^3 L^2 }{|S_2| }    \sum_{t=(n_r-1)q}^{r} \mathbb{E}  \|     \bar{\by}^{t} \|^2 + \alpha \epsilon_1,
 	\end{align*}
 	where the last inequality we use the definition of $\epsilon_1$ in \eqref{epsilon_1}.

 	Next, telescoping  over one inner loop, that is $r$ from $(n_r - 1)q$ to $r$, we have 
 	\begin{align*}
 	&\mathbb{E}[f(\bar{\bx}^{r+1})] \le \mathbb{E}[f(\bar{\bx}^{(n_r-1)q})] - \left(\frac{\alpha}{2}   - \frac{\alpha^2 L}{2} \right) \sum_{t=(n_r-1)q}^{r}\mathbb{E}\|\bar{\by}^t\|^2 + \frac{\alpha L^2}{m}  \sum_{t=(n_r-1)q}^{r} \mathbb{E}\| {\bx^t} - \bOne \bar{\bx}^t \|^2 \\
 	&+  \frac{8 \alpha L^2}{m|S_2|}  \sum_{t=(n_r-1)q}^{r} \sum_{k=(n_r-1)q}^{t} \mathbb{E} \|  {\bx}^{k}  - \bOne \bar{\bx}^{k}\|^2  +\frac{4 \alpha^3 L^2 }{m|S_2|}   \sum_{t=(n_r-1)q}^{r} \sum_{k=(n_r-1)q}^{t}   \mathbb{E}\|     \by^{k} - \bOne \bar{\by}^{k}\|^2\\
 	&+ \frac{4 \alpha^3 L^2}{|S_2|}  \sum_{t=(n_r-1)q}^{r} \sum_{k=(n_r-1)q}^{t}  \mathbb{E} \|     \bar{\by}^{k} \|^2 + \alpha \sum_{t=(n_r-1)q}^{r} \epsilon_1\\
 	& \stackrel{(i)}\le \mathbb{E} [f(\bar{\bx}^{(n_r-1)q})] - \left(\frac{\alpha}{2}   - \frac{\alpha^2 L}{2} - \frac{4 \alpha^3 L^2 q}{|S_2|} \right) \sum_{t=(n_r-1)q}^{r}\mathbb{E}\|\bar{\by}^t\|^2 + \frac{\alpha L^2}{m}  \sum_{t=(n_r-1)q}^{r} \mathbb{E}\| {\bx^t} - \bOne \bar{\bx}^t \|^2 \\
 	&+  \frac{8 \alpha L^2q}{m|S_2|}  \sum_{t=(n_r-1)q}^{r}  \mathbb{E} \|  {\bx}^{t}  - \bOne \bar{\bx}^{t}\|^2  +\frac{4 \alpha^3 L^2q }{m|S_2|}   \sum_{t=(n_r-1)q}^{r}  \mathbb{E}  \|     \by^{t} - \bOne \bar{\by}^{t}\|^2+ \alpha \sum_{t=(n_r-1)q}^{r} \epsilon_1,
 	\end{align*}
 	where ${(i)}$ follows  the fact that, for any sequence $\{a^i\}$, and an index  $r\le n_rq-1$, we have
 	\begin{align}\label{qsum}
 	\sum_{t=(n_r-1)q}^{r} \sum_{k=(n_r-1)q}^{t} a^k \le \sum_{t=(n_r-1)q}^{r} \sum_{k=(n_r-1)q}^{r} a^k \le q \sum_{k=(n_r-1)q}^{r} a^k.
 	\end{align}

 	Then utilizing the fact that  
 	\begin{align}\label{sum_over_outer_loop}
 	\mathbb{E} [f(\bar{\bx}^{r+1})] -\mathbb{E} [f(\bar{\bx}^0)] = \mathbb{E} [f(\bar{\bx}^{r+1})]        -  \mathbb{E}[f(\bar{\bx}^{(n_r-1)q})] + \cdots +  \mathbb{E}[f(\bar{\bx}^{2q})]   -  \mathbb{E}[f(\bar{\bx}^q)] +  \mathbb{E}[f(\bar{\bx}^q)]        -  \mathbb{E} [ f(\bar{\bx}^0)],
 	\end{align}
 	we have  
 	\begin{align*}
 	\mathbb{E} [f(\bar{\bx}^{r+1})] &\le  \mathbb{E} [f(\bar{\bx}^0)] - \left(\frac{\alpha}{2}   - \frac{\alpha^2 L}{2} - \frac{4 \alpha^3 L^2 q}{|S_2|} \right) \sum_{t=0}^{r}\mathbb{E}\|\bar{\by}^t\|^2 + \frac{\alpha L^2}{m}  \sum_{t=0}^{r}\mathbb{E} \| {\bx^t} - \bOne \bar{\bx}^t \|^2 \\
 	&+  \frac{8 \alpha L^2q}{m|S_2|}  \sum_{t=0}^{r}  \mathbb{E} \|  {\bx}^{t}  - \bOne \bar{\bx}^{t}\|^2  +\frac{4 \alpha^3 L^2q }{m|S_2|}   \sum_{t=0}^{r}   \mathbb{E} \|     \by^{t} - \bOne \bar{\by}^{t}\|^2 + \alpha (r+1) \epsilon_1,
 	\end{align*}
 	which completes the proof.
 \end{proof}

 \section{Proof of \leref{Lemma-Contraction}}
 \begin{proof}
 	First, using the Assumption \ref{A3} on $\bW$, we can obtain the contraction property of the iterates, i.e.,
 	\begin{align}\label{W_contraction}
 	\|\bW\bx^r-\bOne\bar{\bx}^r\|=\|\bW(\bx^r-\bOne\bar{\bx}^r)\|\le\eta\|\bx^r-\bOne\bar{\bx}^r\|.
 	\end{align}
 	To see why the inequality holds true, note that $\bOne^{\T}(\bx^r-\bOne\bar{\bx}^r)=0$, that is, $ \bx^r-\bOne\bar{\bx}^r$ is orthogonal $\bOne$, which is the eigenvector corresponding to the largest eigenvalue of $\bW$. Combining with the fact that  $|\ubar{\lambda}_{\max} (\bW)|=\eta <1$, we obtain the above inequality.
 	
 	Then applying the definition of $\bx$ iterates \eqref{update:x} and the Cauchy-Schwartz inequality, we have
 	\begin{align*}
 	\|\bx^{r+1}-\bOne\bar{\bx}^{r+1}\|^2\stackrel{\eqref{update:x}}=& \|\bW \bx^r-\alpha \by^r-\bOne(\bar{\bx}^r-\alpha \bar{\by}^r)\|^2  \\
 	\le &(1+\beta)\|\bW \bx^r -\bOne\bar{\bx}^r\|^2+\left(1+\frac{1}{\beta}\right)\alpha^2 \|\by^r-\bOne\bar{\by}^r\|^2  \\
 	\stackrel{\eqref{W_contraction}} \le& (1+\beta)\eta^2 \| \bx^r -\bOne\bar{\bx}^r\|^2+\left(1+\frac{1}{\beta}\right)\alpha^2 \|\by^r-\bOne\bar{\by}^r\|^2,
 	\end{align*}
 	where $\beta$ is some constant parameter to be tuned later. Then, taking expectation on both sides we are able to obtain \eqref{x_contraction}.
 	
 	Similarly, we have
 	\begin{align*}
 	\|\by^{r+1}-\bOne \bar{\by}^{r+1}\|^2 &\stackrel{\eqref{update:y}}= \|\bW\by^{r} +\bv^{r+1}-\bv^r - \bOne  (  \bar{\by}^r+\bar{\bv}^{r+1} -  \bar{\bv}^{r})\|^2\\
 	&\le (1+\beta)   \|\bW\by^{r} - \bOne  \bar{\by}^r\|^2 +\left(1+\frac{1}{\beta}\right)  \left\| \left(\bI -  \frac{\bOne \bOne^T}{m}\right)  (   {\bv}^{r+1} -  {\bv}^{r}) \right\|^2\\
 	&\le (1+\beta) \eta^2 \|\by^{r} - \bOne  \bar{\by}^r\|^2 +\left(1+\frac{1}{\beta}\right)  \|{\bv}^{r+1} -  {\bv}^{r} \|^2,
 	\end{align*}
 	where in the last inequality we also use 
 $\|\bI-\frac{1}{m}\bOne\bOne^{\T}\|<1$.
 	
 	After taking expectation on both sides and combining the following inequalities,  the proof for \eqref{y_contraction} is complete.

 	To further bound the term $\|\bv^{r+1}-\bv^r\|^2$, consider that we have  $(n_r - 1)q   \le r \le n_rq - 1$, that is $r$ is taken within one inner loop. We will divide the analysis into two cases. 
 	
 	{\bf Case 1)} For all   $(n_r - 1)q   \le r \le n_rq -2 $, we have  mod$(r+1, q)\ne 0$ and the following is straightforward:
 	\begin{align}\label{vv_ne_zero}
 	\mathbb{E}\|\bv^{r+1}-\bv^{r}\|^2 &\stackrel{\eqref{update:v}}= \mathbb{E} \|\frac{1}{|S_2|} \sum_{j\in S_2} \left[ \nabla f_j(\bx^{r+1}) - \nabla f_j(\bx^{r})  \right]  \|^2 \nonumber\\
 	&\stackrel{(i)}\le \frac{1}{|S_2|}\mathbb{E} \sum_{j\in S_2}\|\nabla f_j(\bx^{r+1}) - \nabla f_j(\bx^{r})   \|^2 \nonumber\\
 	&=\frac{1}{n}\sum_{j=1}^{n}\|\nabla f_j(\bx^{r+1}) - \nabla f_j(\bx^{r})   \|^2 \nonumber\\
 	&\stackrel{(ii)}\le L^2 \mathbb{E} \|\bx^{r+1}-\bx^r\|^2,
 	\end{align}
 	where in $(i)$ we use Jensen's inequality and in $(ii)$ we use Assumption \ref{A1}.
 	
 	{\bf Case 2)} If $r=n_rq-1$, we have mod$(r+1, q) = 0$. 
 	Therefore,
 	\begin{align}\label{vv_e_zero}
 	\mathbb{E}\|\bv^{r+1}-\bv^{r}\|^2 &= \mathbb{E} \|\bv^{r+1}-\nabla f(\bx^{r+1})+\nabla f(\bx^{r+1})-\nabla f(\bx^{r})+\nabla f(\bx^{r}) -\bv^r \|^2 \nonumber \\
 	&\stackrel{(i)} \le 3\mathbb{E}\|\bv^{r+1}-\nabla f(\bx^{r+1})\|^2+3\mathbb{E} \|    \nabla f(\bx^{r+1}) -\nabla f(\bx^{r})  \|^2 +3\mathbb{E} \|    \nabla f(\bx^{r}) -\bv^r  \|^2 \nonumber \\
 	&\stackrel{(ii)}\le 3\epsilon_2 + 3L^2  \mathbb{E} \|\bx^{r+1}-\bx^r\|^2 + 3\sum_{t=(n_r-1)q}^r\frac{L^2}{|S_2|} \mathbb{E} \|\bx^{t+1}-\bx^t\|^2 + 3\epsilon_2,
 	\end{align}
 	where in $(i)$ we use the Cauchy-Schwarz inequality; in $(ii)$ we apply \eqref{lemma1-2} from  \leref{bounded_variance}, Assumption \ref{A1}, and  $\mathbb{E}\|\bv^{r}-\nabla f(\bx^{r})\|^2 \le \epsilon_2$ for all mod$(r, q)=0$.

 	Next,  telescoping $\|\bv^{r+1}-\bv^{r}\|^2$ over $r$ from $(n_r - 1)q$ to $r$. Since $r\le n_rq-1$,  we have at most one follows \eqref{vv_e_zero} and all the rest follow \eqref{vv_ne_zero}. Therefore, we obtain  
 	\begin{align*}
 	\sum_{t=(n_r-1)q}^r \mathbb{E}\|\bv^{t+1}-\bv^{t}\|^2 &\le    \sum_{t=(n_r-1)q}^r L^2  \mathbb{E} \|\bx^{t+1}-\bx^t\|^2 +6\epsilon_2+  
 	2L^2  \mathbb{E} \|\bx^{r+1}-\bx^r\|^2+\sum_{t=(n_r-1)q}^r  \frac{3L^2}{|S_2|} \mathbb{E} \|\bx^{t+1}-\bx^t\|^2\\
 	&\le    \sum_{t=(n_r-1)q}^r 6L^2  \mathbb{E} \|\bx^{t+1}-\bx^t\|^2 +6\epsilon_2.
 	\end{align*}
 	Through a similar step as \eqref{sum_over_outer_loop}, the following is obvious
 	\begin{align*}
 	\sum_{t=0}^r \mathbb{E}\|\bv^{t+1}-\bv^{t}\|^2  &\le 6(r+1) \epsilon_2+ \sum_{t=0}^r 6L^2 \mathbb{E} \|\bx^{t+1}-\bx^t\|^2.
 	\end{align*}

 By combining \eqref{eq:diff:x}, i.e., 
 \begin{align*}
 \mathbb{E} \|\bx^{r+1}-\bx^r\|^2 &
\le  8  \mathbb{E} \|   \bx^{r} - \bOne \bar{\bx}^{r} \|^2 +4 \alpha^2  \mathbb{E}\| \by^r - \bOne \bar{\by}^r\|^2+4 \alpha^2  \mathbb{E}\|  \bOne \bar{\by}^r\|^2, \; \forall~r\ge 0,
 \end{align*}
we  complete the proof.  \end{proof}

\section{Proof of \leref{Lemma-Potential}}
 \begin{proof}
 	We first introduce an intermediate function $\textsl{P}(\bx^r)$ to facilitate the analysis,
 	\begin{align*}
 	\textsl{P}(\bx^r) & :=  \mathbb{E} \|\bx^r-\bOne\bar{\bx}^r\|^2+   \alpha   \mathbb{E}  \|{\by}^r-\bOne{\bar{\by}}^r\|^2.
 	\end{align*}
 	Obviously, we have $\textsl{H}(\bx^r)=\mathbb{E}[f(\bar{\bx}^r)]+{\frac{1}{m}} \textsl{P}(\bx^r)$.
 	
 	By applying \eqref{x_contraction} and \eqref{y_contraction} in \leref{Lemma-Contraction} we have
 	\begin{align*}
 	\textsl{P}(\bx^{r+1}) - \textsl{P}(\bx^r) &
 	\le (1+\beta)\eta^2  \mathbb{E} \| \bx^r -\bOne\bar{\bx}^r\|^2+(1+\frac{1}{\beta})\alpha^2  \mathbb{E} \|\by^r-\bOne\bar{\by}^r\|^2 + \alpha  (1+\beta) \eta^2  \mathbb{E} \|\by^{r} - \bOne  \bar{\by}^r\|^2 \\
 	&+ \alpha (1+\frac{1}{\beta})   \mathbb{E} \|   \bv^{r+1} - \bv^{r} \|^2 -   \mathbb{E} \|\bx^r-\bOne\bar{\bx}^r\|^2 -   \alpha    \mathbb{E} \|{\by}^r-\bOne{\bar{\by}}^r\|^2\\
 	& = - \left( 1 - (1+\beta)\eta^2 \right)  \mathbb{E}\| \bx^r -\bOne\bar{\bx}^r\|^2 -\left(  \alpha - \alpha  (1+\beta) \eta^2-(1+\frac{1}{\beta})\alpha^2 \right)  \mathbb{E}\|\by^r-\bOne\bar{\by}^r\|^2 \\
 	&+ \alpha (1+\frac{1}{\beta}) \mathbb{E} \|   \bv^{r+1} - \bv^{r} \|^2.
 	\end{align*}
 	
 	Next, summing over the iteration from $0$ to $r$   we obtain
 	\begin{align} \label{P_Descent}
 	\textsl{P}(\bx^{r+1}) -  \textsl{P}(\bx^0) \le &- \left( 1 - (1+\beta)\eta^2  \right)  \sum_{t=0}^{r}\mathbb{E}\| \bx^t -\bOne\bar{\bx}^t\|^2\\
 	&-\left(  \alpha - \alpha  (1+\beta) \eta^2-(1+\frac{1}{\beta})\alpha^2  \right) \sum_{t=0}^{r} \mathbb{E}\|\by^t-\bOne\bar{\by}^t\|^2 \nonumber\\
 	&  + \alpha (1+\frac{1}{\beta})\sum_{t=0}^{r} \mathbb{E}  \|   \bv^{t+1} - \bv^{t} \|^2. \nonumber
 	\end{align}

 	If we further pick $q=|S_2|$, then \leref{Lemma-Descent} becomes
 	\begin{align}  \label{F_Descent_online}
 	\mathbb{E} [f(\bar{\bx}^{r+1}) ]\le & \mathbb{E}[f(\bar{\bx}^0)] - \left(\frac{\alpha}{2}   - \frac{\alpha^2 L}{2} - 4 \alpha^3 L^2  \right) \sum_{t=0}^{r}\mathbb{E}\|\bar{\by}^t\|^2 \\
 	&+ \frac{9 \alpha L^2}{m}  \sum_{t=0}^{r}  \mathbb{E} \|  {\bx}^{t}  - \bOne \bar{\bx}^{t}\|^2  +\frac{4 \alpha^3 L^2 }{m}   \sum_{t=0}^{r}   \mathbb{E} \|     \by^{t} - \bOne \bar{\by}^{t}\|^2  +\alpha (r+1) \epsilon_1.\nonumber
 	\end{align}
 	Similarly, \eqref{v_contraction} of \leref{Lemma-Contraction} becomes  
 	\begin{align}\label{V_Descent_online}
 	\sum_{t=0}^r \mathbb{E}\|\bv^{t+1}-\bv^{t}\|^2  &\le  48L^2 \sum_{t=0}^r \mathbb{E} \|   \bx^{t} - \bOne \bar{\bx}^{t} \|^2 +24L^2\alpha^2 \sum_{t=0}^r   \mathbb{E}\| \by^t - \bOne \bar{\by}^t\|^2+24L^2 \alpha^2 \sum_{t=0}^r  \mathbb{E}\|  \bOne \bar{\by}^t\|^2+6(r+1) \epsilon_2.
 	\end{align}

 	Therefore,  combine \eqref{P_Descent}, \eqref{F_Descent_online} and \eqref{V_Descent_online}, we have
 	\begin{align*}
 	&\textsl{H}(\bx^{r+1}) - \textsl{H}(\bx^0) \le- \left(\frac{\alpha}{2}   - \frac{\alpha^2 L}{2} - {4 \alpha^3 L^2 } -24   (1+\frac{1}{\beta}) \alpha^3L^2\right) \sum_{t=0}^{r}\mathbb{E}\|\bar{\by}^t\|^2 \\
 	& - \left( 1 - (1+\beta)\eta^2 - 48 \alpha (1+\frac{1}{\beta}) L^2 - {9 \alpha L^2}\right){\frac{1}{m}}  \sum_{t=0}^{r}\mathbb{E}\| \bx^t -\bOne\bar{\bx}^t\|^2\\
 	&-\left(  \alpha - \alpha  (1+\beta) \eta^2-(1+\frac{1}{\beta})\alpha^2 -24  (1+\frac{1}{\beta}) \alpha^3L^2 -{4 \alpha^3 L^2 }  \right) {\frac{1}{m}} \sum_{t=0}^{r}\mathbb{E} \|\by^t-\bOne\bar{\by}^t\|^2 +
 	\alpha (r+1) (\epsilon_1+6{\frac{1}{m}}(1+\frac{1}{\beta}) \epsilon_2).
 	\end{align*}
 	This completes the proof of the result. 
 \end{proof}

 \section{Proof of Theorem \ref{th:main}}
 \begin{proof}
 	To begin with, we notice that by applying the update rule from Algorithm \ref{Algorithm_Finite}, then for all mod$(r, q) = 0$,  the following holds true
 	\begin{align} 
 	&\mathbb{E}\|\bv^{r}-\nabla f(\bx^{r})\|^2 \stackrel{\eqref{update:v}}=0,\\
 	&\mathbb{E} \|\bar{\by}^{r} -\frac{1}{mn} \sum_{i=1}^m    \sum_{k=1}^n \nabla f^i_k(\bx_i^r)\|^2  \stackrel{\eqref{y_v_equal_finite}}= 0,
 	\end{align}  
 	which implies $\epsilon_1=\epsilon_2=0$ for \leref{Lemma-Descent}, \leref{Lemma-Contraction}, and \leref{Lemma-Potential}.

 	Next, if we further pick $\beta$ such that $1-(1+\beta)\eta^2>0$ and  choose $0< \alpha < \min\{K_1, K_2, K_3\}$, we can rewrite \leref{Lemma-Potential} as below with   $C_1> 0, C_2>0, C_3>0$,
 \begin{align}\label{lemma4_finite}
 & \textsl{H}(\bx^{r+1}) -  \textsl{H}(\bx^0) \le- C_1  \sum_{t=0}^{r}\mathbb{E}\|\bar{\by}^t\|^2  - C_2  \sum_{t=0}^{r} {\frac{1}{m}} \mathbb{E}\| \bx^t -\bOne\bar{\bx}^t\|^2 -C_3 \sum_{t=0}^{r} {\frac{1}{m}} \mathbb{E}\|\by^t-\bOne\bar{\by}^t\|^2.
 \end{align}
 
 	Therefore the upper bound of the optimality gap can be quantified as following 
 	\begin{align*}
 	&\min_{r\in [T]} \mathbb{E}\|\frac{1}{m} \sum_{i=1}^m \nabla f^i({\bx}_i^r)\|^2 +  {\frac{1}{m}}\mathbb{E}\|\bx^r-\bOne\bar{\bx}^r\|^2 \\
 	\stackrel{(i)}{\le}& \frac{1}{T}\sum_{t=0}^T\mathbb{E}\|\frac{1}{m} \sum_{i=1}^m \nabla f^i({\bx}_i^t)\|^2 +  \frac{1}{T} \sum_{t=0}^{T} {\frac{1}{m}} \mathbb{E}\| \bx^t -\bOne\bar{\bx}^t\|^2\\
 	\stackrel{(ii)}\le &\frac{2}{T}\sum_{t=0}^T   \mathbb{E} \|\bar{\by}^t\|^2  + \frac{2}{T}\sum_{t=0}^T  \mathbb{E}  \|\bar{\by}^t - \frac{1}{m} \sum_{i=1}^m \nabla f^i({\bx}_i^t)\|^2 + \frac{1}{T} \sum_{t=0}^{T} {\frac{1}{m}} \mathbb{E}\| \bx^t -\bOne\bar{\bx}^t\|^2
 	\end{align*}
 	where in $(i)$ we use the definition of expectation over $r$, in $(ii)$ we use the Cauchy-Schwarz inequality.
 	
 	Applying \eqref{lemma1-1} from \leref{bounded_variance} with $\mathbb{E} \|\bar{\by}^{(n_r-1)q} -\frac{1}{m} \sum_{i=1}^m      \nabla f^i({\bx_i}^{(n_r-1)q})\|^2=0$, 
	telescoping over $r$ from $0$ to $T$ (follows similar reasoning as \eqref{qsum} and \eqref{sum_over_outer_loop}), and using the choice of $|S_2|=q=\sqrt{n}$, we have
	\begin{align*}
	\sum_{t=0}^{r}\mathbb{E}\|\bar{\by}^t -  \frac{1}{m} \sum_{i=1}^m \nabla f^i({\bx}_i^t)\|^2 &\le  \frac{8 L^2}{m}   \sum_{t=0}^{r}  \mathbb{E}\|  {\bx}^{t}  - \bOne \bar{\bx}^{t}\|^2 +\frac{4 \alpha^2 L^2}{m}   \sum_{t=0}^{r} \mathbb{E} \|     \by^{t} - \bOne \bar{\by}^{t} \|^2  + 4 \alpha^2 L^2      \sum_{t=0}^{r} \mathbb{E}  \|     \bar{\by}^{t} \|^2.
	\end{align*}
	Combining the above two inequalities we can obtain
 \begin{align}
 		&\min_{r\in [T]} \mathbb{E}\|\frac{1}{m} \sum_{i=1}^m \nabla f^i({\bx}_i^r)\|^2 +  {\frac{1}{m}} \mathbb{E}\|\bx^r-\bOne\bar{\bx}^r\|^2 \\
 	     \le & \left(  \frac{16 L^2}{mT} + \frac{1}{{m}T} \right)  \sum_{t=0}^T  \mathbb{E}\|  {\bx}^{t}  - \bOne \bar{\bx}^{t}\|^2 +\frac{8 \alpha^2 L^2 }{mT}  \sum_{t=0}^T \mathbb{E} \|     \by^{t} - \bOne \bar{\by}^{t} \|^2+\left(\frac{8 \alpha^2 L^2 }{ T}   + \frac{2}{T}\right)\sum_{t=0}^T\|\bar{\by}^t\|^2.
 	\end{align}
 	Further combining with \eqref{lemma4_finite}, we have
 	\begin{align*}
 	\min_{r\in [T]} \mathbb{E}\|\frac{1}{m} \sum_{i=1}^m \nabla f^i({\bx}_i^r)\|^2 + {\frac{1}{m}} \mathbb{E}\|\bx^r-\bOne\bar{\bx}^r\|^2 
 	\le  &  C_0 \cdot \frac{\textsl{H}(\bx^0) -{\textsl{H}(\bx^{T+1})} }{T} \le   C_0 \cdot \frac{\mathbb{E}[f({\bx}^{0})] - \ubar{f}}{T},
 	\end{align*}
where 
\begin{align*}
C_0:= \left(   \frac{8 \alpha^2 L^2+2}{C_1} + \frac{16 L^2+{1}}{m C_2}  +\frac{8 \alpha^2 L^2  }{m C_3}  \right),
\end{align*}
and the last inequality follows from
 	\begin{align*}
 	\textsl{H}(\bx^0) & := \mathbb{E}  [f(\bar{\bx}^{0})]+ \mathbb{E} \|\bx^0-\bOne\bar{\bx}^0\|^2+   \alpha   \mathbb{E} \|{\by}^0-\bOne{\bar{\by}}^0\|^2 =\mathbb{E} [ f(\bar{\bx}^{0})],\\
 	\textsl{H}(\bx^r) & :=  \mathbb{E} [f(\bar{\bx}^{r})]+ \mathbb{E} \|\bx^r-\bOne\bar{\bx}^r\|^2+   \alpha  \mathbb{E}  \|{\by}^r-\bOne{\bar{\by}}^r\|^2 \ge\mathbb{E}  [f(\bar{\bx}^{r})] \ge \ubar{f}.
 	\end{align*}
This completes the proof. \end{proof}

\section{Proof of  \leref{online_lemma5}}

 \begin{proof}

First recall the definition of  $\mathbb{E} [\cdot | \mathcal{F}_r]$ in \leref{bounded_variance}, which is the expectation with respect to the random choice of sample $\xi$, conditioning on $\bx^0, \cdots, \bx^{r}$, $\bv^0, \cdots, \bv^{r-1}$ and $\by^0, \cdots, \by^{r-1}$.

 	Let us define a random variable $u_\xi$ as below and $u_{\ell}$ similarly,
 	\begin{align}
 	u_\xi = \frac{1}{m}\sum_{i=1}^m  \nabla f^i_\xi(\bx_i^{r})-\frac{1}{m} \sum_{i=1}^m     \nabla f^i(\bx_i^r).
 	\end{align}
 	Note that $u_\xi$ and $u_\ell$ are independent random variables conditioning on $\mathcal{F}$. Further,  we have the  following from Assumption \ref{A2}
 	\begin{align}  
 	 \mathbb{E}_\xi\left[   \frac{1}{m}\sum_{i=1}^m  \nabla f^i_\xi(\bx_i^{r})-\frac{1}{m} \sum_{i=1}^m     \nabla f^i(\bx_i^r) \bigg| \mathcal{F}_r\right]=0.
 	 \end{align}
Therefore we have
\begin{align}\label{eliminate_cross}
\mathbb{E}[\langle u_\xi, u_\ell \rangle] = \mathbb{E}_{\mathcal{F}}\mathbb{E}[\langle u_\xi, u_\ell\rangle\mid \mathcal{F}] = \mathbb{E}_{\mathcal{F}}\langle \mathbb{E}[u_\xi\mid \mathcal{F}], \mathbb{E}[u_\ell\mid \mathcal{F}]\rangle =0.
\end{align}

Following the update rule from Algorithm \ref{Algorithm_Online}, we have the following relations 
for all mod$(r, q)=0$
	\begin{align*}
	\mathbb{E} \left\|\bar{\by}^{r} -\frac{1}{m} \sum_{i=1}^m    \nabla f^i(\bx_i^r)\right\|^2 \stackrel{\eqref{y_v_equal_online}}= &\mathbb{E} \left\| \frac{1}{m|S_1|}\sum_{i=1}^m \sum_{\xi\in S_1}  \nabla f^i_\xi(\bx_i^{r})-\frac{1}{m} \sum_{i=1}^m     \nabla f^i(\bx_i^r)\right\|^2\\
	 \stackrel{(i)}= &\frac{1}{|S_1|^2}\mathbb{E} \left\|\sum_{\xi\in S_1}\left( \frac{1}{m}\sum_{i=1}^m   \nabla f^i_\xi(\bx_i^{r})-\frac{1}{m} \sum_{i=1}^m     \nabla f^i(\bx_i^r)\right)\right\|^2\\
	\stackrel{(ii)}= & \frac{1}{|S_1|^2} \mathbb{E} \sum_{\xi\in S_1} \left\| \frac{1}{m}\sum_{i=1}^m  \nabla f^i_\xi(\bx_i^{r})-\frac{1}{m} \sum_{i=1}^m     \nabla f^i(\bx_i^r)\right\|^2\\
	\stackrel{(iii)}= & \frac{1}{|S_1|} \mathbb{E}   \left\| \frac{1}{m}\sum_{i=1}^m  \nabla f^i_\xi(\bx_i^{r})-\frac{1}{m} \sum_{i=1}^m     \nabla f^i(\bx_i^r)\right\|^2\\
	\stackrel{(iv)}\le & \frac{1}{m|S_1|}  \sum_{i=1}^m  \mathbb{E}\left\|  \nabla f^i_\xi(\bx_i^{r})-     \nabla f^i(\bx_i^r)\right\|^2\\
	\le & \frac{\sigma^2}{|S_1|},
	\end{align*}
	where in $(i)$ we take out the constant $|S_1|$; in $(ii)$ we eliminate the cross terms via \eqref{eliminate_cross}; in $(iii)$ we use the fact that the following term
	\begin{align}  
 	 \mathbb{E}\|   \frac{1}{m}\sum_{i=1}^m  \nabla f^i_\xi(\bx_i^{r})-\frac{1}{m} \sum_{i=1}^m     \nabla f^i(\bx_i^r)\|^2
 	 \end{align}
 	 are equal across different samples $\xi$; in $(iv)$ we use Jensen's inequality, and the last inequality follows the Assumption \ref{A4}.

Similarly, we have
	\begin{align*}
\mathbb{E} \|{\bv}^{r} - \nabla f({\bx}^{r})\|^2 \stackrel{\eqref{update:v:online}}= &\mathbb{E} 
\left\| \frac{1}{|S_1|}  \sum_{\xi\in S_1}  \nabla f_\xi(\bx^{r})-  \nabla f({\bx}^{r})\right\|^2\\
 = & \frac{1}{|S_1|} \mathbb{E} \|   \nabla f_\xi(\bx^{r})-  \nabla f({\bx}^{r})\|^2\\
 \le & \frac{1}{|S_1|} \sum_{i=1}^m \mathbb{E} \|   \nabla f_\xi^i(\bx^{r})-  \nabla f^i({\bx}^{r})\|^2\\
\le & \frac{m\sigma^2}{|S_1|}.
\end{align*}
This completes the proof.
 \end{proof}

\section{Proof of Theorem \ref{th:main2}}

\begin{proof}

Note that it is easy to check that \leref{bounded_variance}, \leref{Lemma-Descent}, \leref{Lemma-Contraction} and \leref{Lemma-Potential}  still hold true. And the quantity $\epsilon_1$ and $\epsilon_2$ can be determined by \leref{online_lemma5}, i.e., $\epsilon_1=\frac{\sigma^2}{|S_1|}$ and $\epsilon_2=\frac{m\sigma^2}{|S_1|}$. Therefore, \leref{Lemma-Potential} can be rewritten as 
below if we follow	Algorithm \ref{Algorithm_Online}, 
		\begin{align}\label{lemma4_online}
	&\textsl{H}(\bx^{r+1}) -  \textsl{H}(\bx^0) \le- C_1 \sum_{t=0}^{r}\mathbb{E}\|\bar{\by}^t\|^2  - C_2  \sum_{t=0}^{r}{\frac{1}{m}}\mathbb{E}\| \bx^t -\bOne\bar{\bx}^t\|^2 -C_3 \sum_{t=0}^{r} {\frac{1}{m}}\mathbb{E}\|\by^t-\bOne\bar{\by}^t\|^2 +\epsilon_3,
	\end{align}
	with $\epsilon_3=\alpha (r+1) (1+6(1+\frac{1}{\beta})) \frac{\sigma^2}{|S_1|}$.
	
	Therefore the upper bound of the optimality gap can be derived in a similar way as Theorem \ref{th:main},
	\begin{align*}
	&\min_{r\in [T]}\mathbb{E}\|\frac{1}{m} \sum_{i=1}^m \nabla f^i({\bx}_i^r)\|^2 +  {\frac{1}{m}}\mathbb{E}\|\bx^r-\bOne\bar{\bx}^r\|^2 \\
	\le &\frac{2}{T}\sum_{t=0}^T   \mathbb{E} \|\bar{\by}^t\|^2  + \frac{2}{T}\sum_{t=0}^T  \mathbb{E}  \|\bar{\by}^t - \frac{1}{m} \sum_{i=1}^m \nabla f^i({\bx}_i^t)\|^2 + \frac{1}{T} \sum_{t=0}^{T}{\frac{1}{m}}\mathbb{E}\| \bx^t -\bOne\bar{\bx}^t\|^2\\
	\le & \left(  \frac{16 L^2}{mT} + \frac{1}{mT} \right)  \sum_{t=0}^T  \mathbb{E}\|  {\bx}^{t}  - \bOne \bar{\bx}^{t}\|^2 +\frac{8 \alpha^2 L^2 }{mT}  \sum_{t=0}^T \mathbb{E} \|     \by^{t} - \bOne \bar{\by}^{t} \|^2+\left(\frac{8 \alpha^2 L^2 }{ T}   + \frac{2}{T}\right)\sum_{t=0}^T\|\bar{\by}^t\|^2 + \frac{2}{T} \sum_{t=0}^T \frac{\sigma^2}{|S_1|}
	\end{align*}
	Further combining \eqref{lemma4_online} we have
	\begin{align*}
	&\min_{r\in [T]}\mathbb{E}\|\frac{1}{m} \sum_{i=1}^m \nabla f^i({\bx}_i^r)\|^2 +  {\frac{1}{m}}\mathbb{E}\|\bx^r-\bOne\bar{\bx}^r\|^2 \\
	    \le & C_0 \left( \frac{\textsl{H}(\bx^0) -\textsl{H}(\bx^{T+1})+\epsilon_3 }{T} \right)  + \frac{2T+2}{T} \frac{\sigma^2}{|S_1|}  \\
	\le & C_0 \cdot \frac{\mathbb{E}[f({\bx}^{0})] - \ubar{f}}{T} + C_0 \cdot \frac{\alpha(T+1)(7+\frac{6}{\beta}) \sigma^2}{T|S_1|} + \frac{2T+2}{T} \frac{\sigma^2}{|S_1|}.
	\end{align*}
	After picking $|S_1| = \frac{4C_0\alpha(7+\frac{6}{\beta})\sigma^2+8\sigma^2}{\epsilon}$, we complete the proof.
	\end{proof}

\end{document}